\newtheorem{prop}{Proposition}
\newtheorem{lemma}[prop]{Lemma}
\newtheorem{theorem}[prop]{Theorem}
\newtheorem{definition}[prop]{Definition}
\newtheorem{notation}[prop]{Notation}
\newcommand{\R}{\mathbb R}
\newcommand{\Z}{\mathbb Z}
\newcommand{\T}{\mathbb T}
\newcommand{\w}{\omega}
\newcommand{\e}{\varepsilon}
\newcommand{\g}{\gamma}
\newcommand{\m}{\textfrak{m}}
\newcommand{\x}{{\bf{x}}}
\renewcommand{\v}{{\bf{v}}}
\newcommand{\mc}{\mathcal}
\def\set4{\mathcal I}
\def\tup14{(1,2,3,4)}
\begin{document}

\title{Regularized Brascamp-Lieb inequalities and an application}
\author{Dominique Maldague}
\address{Department of Mathematics\\
Massachusetts Institute of Technology\\
Cambridge, MA 02142-4307, USA}
\email{dmal@mit.edu}

\date{\today}

\maketitle

\begin{abstract}
We present a certain regularized version of Brascamp-Lieb inequalities studied by Bennett, Carbery, Christ, and Tao. Using the induction-on-scales method of Guth, these regularized inequalities lead to a generalization of the multilinear Kakeya inequality.
\end{abstract}

By Brascamp-Lieb inequalities, we mean inequalities of the form 
\begin{equation}  \int_{\R^n}\prod_{j=1}^Jf_j(\pi_j(\x))^{p_j} d\x \le C \prod_{j=1}^J\left(\int_{\R^{n_j}}f_j(\x_j)d\x_j\right)^{p_j}. \label{HBL} \end{equation}
where the $f_j$ are nonnegative, measurable functions, $\vec{\pi}=(\pi_j)$ is a $J$-tuple of linear maps from $\R^n$ to $\R^{n_j}$, and $\vec{p}\in[0,1]^J$. A related quantity is the smallest constant $C$ which satisfies the above inequality for all nonnegative input functions $f_j\in L^1(\R^{n_j})$, denoted by $\text{BL}(\vec{\pi},\vec{p})$. Locally bounded properties of $\text{BL}(\vec{\pi},\vec{p})$ lead to multilinear Kakeya inequalities: Guth \cite{guth} first did this for a special case $(\vec{\pi}_0,\vec{p}_0)$ and Bennett, Bez, Flock, and Lee \cite{stabBL} extended Guth's approach to data $(\vec{\pi},\vec{p})$ satisfying $\text{BL}(\vec{\pi},\vec{p})<\infty$. The motivation for this paper is to formulate a regularized version of $\text{BL}(\vec{\pi},\vec{p})$ which leads to multilinear Kakeya inequalities in the case that we do not have finite Brascamp-Lieb data (see below for further discussion of multilinear Kakeya).

Some of the most fundamental inequalities of this form are H\"{o}lder's inequality and Young's convolution inequality. In \cite{BL}, Brascamp and Lieb determined the optimal version of Young's convolution inequality (also proved independently by Beckner in \cite{beckner}), and proved a generalized Young's inequality for more than three functions. Barthe gave a concrete description of when $\text{BL}(\vec{\pi},\vec{p})<\infty$ in the rank one case (all $n_j=1$) \cite{barthe}. In 2008, Bennett, Carbery, Christ, and Tao (BCCT) determined the criterion for $\text{BL}(\vec{\pi},\vec{p})<\infty$ \cite{BCCT}. BCCT also investigated extremal configurations and variants of the Brascamp-Lieb inequality in \cite{BCCT} and \cite{BCCT2} respectively.  

In this paper, we consider the following regularized version of (\ref{HBL}):
\begin{equation}
    \label{wHBL}\int_{[-R,R]^n} \prod_{j=1}^Jf_j(\pi_j(\x))^{p_j} d\x \le C \prod_{j=1}^J\left(\int_{\R^{n_j}}f_j(\x_j)d\x_j\right)^{p_j}
\end{equation}
where the $f_j$ are nonnegative, measurable functions that are constant on cubes in the unit cube lattice of $\R^{n_j}$, meaning on all sets of the form $\v+[0,1)^{n_j}$ where $\v\in\Z^{n_j}$. The optimal constant $\text{\underline{BL}}(\vec{\pi},\vec{p},R)$ for this inequality is defined by
\begin{equation}
    \text{\underline{BL}}(\vec{\pi},\vec{p},R):= \sup_{\vec{f}}\frac{\int_{[-R,R]^n}\prod\limits_{j=1}^Jf_j(\pi_j(\x))^{p_j}d\x}{\prod\limits_{j=1}^J\left(\int_{\R^{n_j}}f_j(\x_j)d\x_j\right)^{p_j}}\label{Cdef}
\end{equation}
where the supremum is taken over $\vec{f}=(f_1,\ldots,f_J)$ with nonzero, nonnegative functions $f_j\in L^1(\R^{n_j})$ that are constant on each cube in a unit cube tiling of $\R^{n_j}$. The goal of this paper is to identify the growth rate of $\text{\underline{BL}}(\vec{\pi},\vec{p},R)$  as a function of $R$, which we do in the following theorem. 
\begin{theorem}\label{main} Let $n\ge 1$, $J\ge 1$, and $\vec{p}\in[0,1]^J$. For each $j=1,\ldots,J$, let $n_j$ be an integer satisfying $1\le n_j\le n$ and let $\pi_j:\R^n\to\R^{n_j}$ be a surjective linear map. There exist constants $c,C\in\R^+$, depending on $\vec{\pi}$ and $\vec{p}$, which satisfy
\[c\sup_{V\le \R^n}R^{\dim V-\sum\limits_{j=1}^Jp_j\dim\pi_j(V)}\le \text{\emph{\underline{BL}}}(\vec{\pi},\vec{p},R)\le C\sup_{V\le \R^n}R^{\dim V-\sum\limits_{j=1}^Jp_j\dim\pi_j(V)}\]
for all $R\ge 1$, where the supremum is taken over all subspaces $V$ of $\R^n$.
\end{theorem}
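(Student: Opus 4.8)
The plan is to establish the two inequalities separately: the lower bound by an explicit test configuration for each subspace, and the upper bound by induction on $n$, following the critical-subspace factorization of Bennett, Carbery, Christ, and Tao \cite{BCCT} adapted to the regularized setting. For the lower bound, fix $V\le\R^n$ with $\dim V=d$, pick an orthonormal basis $v_1,\dots,v_d$ of $V$ and extend it to an orthonormal basis $v_1,\dots,v_n$ of $\R^n$, and set $T=\{\sum_{i\le d}t_iv_i+\sum_{i>d}s_iv_i:|t_i|\le R/\sqrt n,\ |s_i|\le 1/\sqrt n\}\subseteq[-R,R]^n$, a ``slab'' of volume $\gtrsim_n R^{d}$. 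Taking $f_j$ to be the indicator of the union of the unit cubes of $\R^{n_j}$ that meet $\pi_j(T)$ produces admissible functions with $\int_{\R^{n_j}}f_j\lesssim_{\vec\pi,n}R^{\dim\pi_j(V)}$, where the implied constant is uniform in $V$ precisely because the basis is orthonormal; since $f_j(\pi_j(\x))=1$ for $\x\in T$, the numerator in \eqref{Cdef} is at least $|T|$, and dividing and then taking the supremum over $V$ gives the left-hand inequality with $c=c(\vec\pi,\vec p)$.

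For the upper bound I would induct on $n$. Write $\Phi(V):=\dim V-\sum_j p_j\dim\pi_j(V)$ and $\rho:=\max_{V\le\R^n}\Phi(V)$ (a maximum, as $(\dim V,\dim\pi_1(V),\dots)$ takes finitely many values); from modularity of $\dim$ and $\dim\pi_j(V+W)+\dim\pi_j(V\cap W)\le\dim\pi_j(V)+\dim\pi_j(W)$ one sees $\Phi$ is supermodular, so its maximizers form a lattice with a largest element $V_{\max}$. The heart of the matter is the case where some maximizer $V_0$ has $0<\dim V_0<n$. After a bounded linear change of coordinates on $\R^n$ and on each $\R^{n_j}$ — which I would first show alters $\underline{\mathrm{BL}}(\vec\pi,\vec p,R)$ only by a constant factor and by replacing $R$ with $\asymp R$, using that once $\int f_j=1$ an admissible $f_j$ satisfies $f_j\le 1$ and such functions are carried by bounded linear maps to bounded unit-cube-constant majorants of comparable $L^1$ norm — one may assume $V_0=\R^{d}\times\{0\}$ and $\pi_j(V_0)=\R^{d_j}\times\{0\}$ with $d=\dim V_0$, $d_j=\dim\pi_j(V_0)$. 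Slicing $\int_{[-R,R]^n}\prod_j f_j(\pi_j\x)^{p_j}\,d\x$ over the cosets of $V_0$, dominating on each slice the functions of the $V_0$-variable by unit-cube-constant majorants $\tilde g_{j,\x''}$ with $\int\tilde g_{j,\x''}\lesssim F_j(\x'')$, applying $\underline{\mathrm{BL}}$ for the restricted data $\vec\pi'=(\pi_j|_{V_0})$ on $\R^{d}$, and then applying $\underline{\mathrm{BL}}$ for the quotient data $\bar{\vec\pi}=(\bar\pi_j\colon\R^n/V_0\to\R^{n_j}/\pi_j(V_0))$ on $\R^{n-d}$ to the resulting integral in the transverse variable (after a further majorization $F_j\le\tilde h_j$, $\int\tilde h_j\lesssim\int f_j$), gives
\[\underline{\mathrm{BL}}(\vec\pi,\vec p,R)\ \lesssim\ \underline{\mathrm{BL}}(\vec\pi',\vec p,R)\,\underline{\mathrm{BL}}(\bar{\vec\pi},\vec p,R).\]
Since $\Phi_{\vec\pi'}(U)=\Phi(U)$ for $U\le V_0$ and $\Phi_{\bar{\vec\pi}}(V/V_0)=\Phi(V)-\Phi(V_0)$ for $V\supseteq V_0$, and $V_0$ is a maximizer, the two factors have exponents $\rho_1=\Phi(V_0)=\rho$ and $\rho_2=0$, so the inductive hypothesis on $\R^{d}$ and $\R^{n-d}$ yields the bound $R^\rho$.

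When no maximizer has dimension strictly between $0$ and $n$, one argues as follows. If $V_{\max}=\R^n$ and $\rho=0$, then the scaling identity $n=\sum_j p_jn_j$ holds and $\Phi\le 0$ everywhere, so $\mathrm{BL}(\vec\pi,\vec p)<\infty$ by the criterion of \cite{BCCT} and $\underline{\mathrm{BL}}(\vec\pi,\vec p,R)\le\mathrm{BL}(\vec\pi,\vec p)=C\,R^0$. If $V_{\max}=\R^n$ and $\rho>0$, adjoin the dummy map $\sigma=\mathrm{id}_{\R^n}$ with exponent $q=\min\!\big(\rho/n,\ \min_{0<\dim V<n}\tfrac{\rho-\Phi(V)}{n-\dim V}\big)>0$; the augmented data has maximal $\Phi$ equal to $\rho-qn<\rho$ and, depending on which term realizes $q$, either has a maximizer of dimension strictly between $0$ and $n$ or satisfies scaling with maximal $\Phi$ equal to $0$, so it falls into a case already treated, and inserting $\one_{[-R,R]^n}(\x)^q$ into \eqref{Cdef} shows $\underline{\mathrm{BL}}(\vec\pi,\vec p,R)\lesssim R^{nq}\underline{\mathrm{BL}}(\sigma,\vec\pi;q,\vec p,R)\lesssim R^\rho$. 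Finally, if $V_{\max}=\{0\}$ but $\R^n$ is not a maximizer (so $\rho=0$, $\sum_j p_jn_j>n$, and $\Phi<0$ on all nonzero proper subspaces), decrease the positive components of $\vec p$ uniformly until some inequality $\Phi_{\vec p'}(V)\le 0$ first becomes an equality: the resulting $\vec p'\le\vec p$ in $[0,1]^J$ has maximal $\Phi$ still equal to $0$ and either satisfies scaling, or has a nonzero proper maximizer, or has a vanishing component (which removes a function), and since $f_j^{p_j}\le f_j^{p_j'}$ once $\int f_j=1$ we get $\underline{\mathrm{BL}}(\vec\pi,\vec p,R)\le\underline{\mathrm{BL}}(\vec\pi,\vec p',R)\lesssim R^0$. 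The base cases $n=1$ (generalized H\"older) and $J=0$ ($\int_{[-R,R]^n}1=(2R)^n$) are immediate, and the recursion terminates because every step either decreases $n$, decreases $n$ after one auxiliary dummy step, or decreases $J$ at fixed $n$.

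The step I expect to be the main obstacle is the factorization estimate in the second paragraph. In the classical, non-regularized theory this is merely Fubini's theorem along $V_0$, but here the slices $\x'\mapsto f_j(\pi_j(\x',\x''))$ and the marginals $F_j$ are not constant on the unit-cube lattice of the relevant smaller space, and the change of coordinates needed to put $V_0$ and the $\pi_j(V_0)$ in coordinate position distorts the lattice further. The real work is therefore to build unit-cube-constant majorants whose $L^1$ norms are controlled \emph{uniformly in the slice parameter $\x''$ and in $R$}, and to check that the change of coordinates — together with the replacement of $R$ by a bounded multiple of $R$, which is harmless because $c^{\Phi(V)}$ ranges over the finitely many values of $\Phi(V)$ and is thus bounded — costs only constants; the algebra with $\Phi$ is comparatively routine.
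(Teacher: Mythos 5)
Your proposal is correct in outline and shares the paper's skeleton: the lower bound is the same slab construction (a $\sim R\times\cdots\times R\times 1\times\cdots\times 1$ neighborhood of $V$ inside $[-R,R]^n$, with $f_j$ the indicator of the unit cubes meeting its $\pi_j$-image), and the upper bound is an induction on $n$ that factors through a subspace attaining $\sup_V\Phi(V)$, with the restricted data carrying the full exponent and the quotient data carrying exponent $0$. Where you genuinely diverge is in the two delicate points. First, the bookkeeping: the paper never changes coordinates or re-majorizes by cube-constant functions; instead it proves the more general Proposition \ref{tech}, stated for surjective maps between arbitrary subspaces $H\to H_j$ with the lattice local-sup norms $\|f\|_{A,\mc{L}_j^0}$, so that the sliced functions $f_j(\cdot+\pi_j(y))$ and the marginals $F_j$ are automatically admissible inputs for the inductive hypothesis, the non-lattice translations $u_j(y)$ being absorbed by bounded-overlap counts and the two norms composing into $\|f_j\|_{A_j+A_j',\mc{L}_j^0}$. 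Your substitute --- a transference lemma saying a bounded linear change of variables changes $\underline{\mathrm{BL}}(\vec\pi,\vec p,R)$ only by constants (via cube-constant sup-majorants with bounded overlap, using $f_j\le 1$ after normalization), followed by honest Fubini in product coordinates --- is sound and does the same work; the paper's formulation buys a cleaner induction with no change of variables, yours buys a statement that stays at the level of Theorem \ref{main} itself. Second, the non-critical case (only $\{0\}$ and/or $\R^n$ maximize $\Phi$): the paper passes to the polytope $K$ of admissible exponents, reduces to its extreme points by interpolation, and at an extreme point either lands back in the critical-subspace case or has some $p_r=0$, inducting on $J$, with the $\sup\Phi\le 0$ case delegated to Theorem 2.5 of \cite{BCCT2}. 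You instead adjoin the dummy datum $\sigma=\mathrm{id}_{\R^n}$ with exponent $q=\min\big(\rho/n,\min_{0<\dim V<n}\tfrac{\rho-\Phi(V)}{n-\dim V}\big)$ (note $q\le\rho/n\le 1$, so the augmented exponents stay admissible), insert a lattice-aligned indicator of a cube of side $\asymp R$ to pay $R^{nq}$, and in the $\rho=0$ cases use monotonicity in the exponents (again via $f_j\le1$) plus the global finiteness criterion of \cite{BCCT}; this avoids both the interpolation step and the appeal to the local theorem of \cite{BCCT2}, at the cost of the extra case analysis, whose termination you correctly justify. The remaining gaps are minor and of the kind you already flag: the transference lemma and the uniform-in-slice majorant estimates must be written out (they follow from the bounded-overlap argument you indicate), the cube $[-R,R]^n$ should be replaced by a lattice-aligned one when used as a test function, and the degenerate indices with $\dim\pi_j(V_0)\in\{0,n_j\}$ in the factorization need the obvious bookkeeping since the theorem as stated assumes surjective maps onto spaces of positive dimension.
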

Since the value of $\text{\underline{BL}}(\vec{\pi},\vec{p},R)$ does not change if we replace $\R^{n_j}$ by $\pi_j(\R^n)$, it is no loss of generality to assume that the $\pi_j$ are surjective. The quantity $\text{\underline{BL}}(\vec{\pi},\vec{p},R)$ is finite because 
\begin{align*}
    \int_{\{x\in\R^n:|x|\le R\}}\prod_{j=1}^Jf_j&(\pi_j(\x))^{p_j}d\x \le c_nR^n\prod_{j=1}^J\|f_j\|_{\infty}^{p_j} \\
    &\le c_nR^n\prod_{j=1}^J\left(\sum_{\v\in\Z^{n_j}}\|f_j\|_{L^\infty(\v+[0,1)^{n_j})}\right)^{p_j}\\
    &=c_nR^n\prod_{j=1}^J\left(\int_{\R^{n_j}}f_j(\x_j)d\x_j\right)^{p_j}.
\end{align*}
Also note that if the supremum defining $\text{\underline{BL}}(\vec{\pi},\vec{p},R)$ were taken over nonnegative $f_j\in L^1(\R^{n_j})$ which were \emph{not} necessarily constant on the unit cube lattice of $\R^{n_j}$, then $\text{\underline{BL}}(\vec{\pi},\vec{p},R)$ is not necessarily finite (see Theorem 2.2 in \cite{BCCT2}).

One motivation for understanding the regularized Brascamp-Lieb inequality is to extend multilinear Kakeya results. First formulated in 2006, the multilinear Kakeya inequality is an $L^p$-measurement of how much tubes in different directions can overlap. Let $\T_j$ be a collection of $\delta-$neighborhoods $T_j$ of lines in $\R^n$ that are sufficiently parallel (independent of $\delta$) to the $e_j$-axis, where $e_j$ is the standard basis vector. The multilinear Kakeya inequality states that
\begin{equation}\label{background MK}
\int_{\R^n}\prod_{j=1}^n\big(\sum_{T_j\in\T_j}\chi_{T_j}\big)^{\frac{p}{n}}\le C \delta^n\prod_{j=1}^n\big(\#\T_j\big)^{\frac{p}{n}}
\end{equation}
for $\frac{n}{n-1}\le p< \infty$. For the endpoint exponent $p=\frac{n}{n-1}$ and the special case that the $T_j$ are parallel to the $e_j$, (\ref{background MK}) is just the Loomis-Whitney inequality, a special Brascamp-Lieb inequality. Thus multilinear Kakeya can be regarded as a generalization of Loomis-Whitney. 

Bennett, Carbery, and Tao proved (\ref{background MK}) in 2006 without an epsilon loss away from the endpoint (which implies the endpoint with an epsilon loss by H\"{o}lder's inequality) \cite{BCT}. In 2009, Guth eliminated the $\epsilon$ loss factor using an alternative proof involving Dvir's polynomial method approach to the analogous problem over finite fields \cite{guth2,dvir}. Guth reproved a weaker, truncated version of the multilinear Kakeya inequality with a simple induction on scales argument in \cite{guth2}. 

Since the multilinear Kakeya inequality is a generalization of Loomis-Whitney, it is natural to wonder if there are analogues of the multilinear Kakeya inequality which are generalizations of Brascamp-Lieb inequalities. Bennett, Bez, Flock, and Lee \cite{stabBL} proved a stability property of the Brascamp-Lieb constant $\text{BL}(\vec{\pi},\vec{p})$ which combined with Guth's induction on scales result, led to a truncated multilinear Kakeya inequality with $\epsilon-$losses for finite Brascamp-Lieb data (meaning $\text{BL}(\vec{\pi},\vec{p})<\infty$). Zhang further developed Guth's polynomial method proof in \cite{guth2} to prove that these $\epsilon-$losses could be removed \cite{zhang}. 

In this paper, we prove a multilinear Kakeya result without any assumptions on the Brascamp-Lieb data. Instead, a factor which controls the regularized Brascamp-Lieb constant appears in the upper bound. To describe our generalized (truncated) multilinear Kakeya inequality, fix some notation. Let $\vec{\pi_0}$ be a $J$-tuple of orthogonal projections from $\R^n$ to subspaces $(V_j^0)^\perp\subset\R^n$ where $V_j^0$ has dimension $n_j'$. Let $n_j=n-n_j'$. We will consider affine subspaces $V_j\subset\R^n$ which, modulo translations, are within a distance $\nu$ of the $V_j^0$. Here distance is the standard metric on the Grassmann manifold of $n_j'$-dimensional subspaces of $\R^n$.

\begin{theorem} \label{genKak} Fix $\vec{p}\in[0,1]^J$ and $J$ orthogonal projections $\pi_j^0$ from $\R^n$ onto $(V_j^0)^\perp$. Then there exists $\nu>0$ satisfying for every $\epsilon>0$, there exists $C_\epsilon>0$ such that
\[ \int_{[-1,1]^n}\prod_{j=1}^J\big(\sum_{T_j\in\T_j}\chi_{T_j}\big)^{p_j}\le C_\epsilon\delta^{-\epsilon+n}\sup_{V\le \R^n}\delta^{-\dim V+\overset{J}{\underset{j=1}{\sum}} p_j\dim\pi_j^0(V)}\prod_{j=1}^J(\#\T_j)^{p_j} \]
holds for all finite collections $\T_j$ of $\delta$-neighborhoods of $n_j'$-dimensional affine subspaces of $\R^n$ which, modulo translations, are within a distance $\nu$ of the fixed subspace $V_j^0$. 
\end{theorem}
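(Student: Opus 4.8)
To prove Theorem~\ref{genKak} I would run Guth's induction-on-scales scheme, using the regularized Brascamp--Lieb inequality of Theorem~\ref{main} as the single-scale estimate. Set $\sigma:=\max_{V\le\R^n}\bigl(\dim V-\sum_{j=1}^Jp_j\dim\pi_j^0(V)\bigr)$; taking $V=\{0\}$ and $V=\R^n$ shows $0\le\sigma\le n$, and $\sup_V\delta^{-\dim V+\sum p_j\dim\pi_j^0(V)}=\delta^{-\sigma}$, so the asserted bound reads $\int_{[-1,1]^n}\prod_j(\sum_{T_j\in\T_j}\chi_{T_j})^{p_j}\le C_\epsilon\,\delta^{\,n-\sigma-\epsilon}\prod_j(\#\T_j)^{p_j}$. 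Fix $\nu$ small enough that the following geometry is clean on $[-1,1]^n$: a $\delta$-neighborhood $T_j$ of an $n_j'$-dimensional affine subspace whose direction lies within $\nu$ of $V_j^0$ is contained in a $C\max(\delta,\nu)$-neighborhood of an affine subspace \emph{parallel to} $V_j^0=\ker\pi_j^0$, so that $\pi_j^0(T_j)$ is a ball of radius $\approx\max(\delta,\nu)$ in $\R^{n_j}$. Let $\mathrm{Kak}(\delta)$ be the smallest constant for which $\int_{[-1,1]^n}\prod_j(\sum_{T_j\in\T_j}\chi_{T_j})^{p_j}\le\mathrm{Kak}(\delta)\,\delta^{\,n}\prod_j(\#\T_j)^{p_j}$ holds over all admissible families of $\delta$-tubes; the goal is $\mathrm{Kak}(\delta)\lesssim_\epsilon\delta^{-\sigma-\epsilon}$. (It is convenient to replace the $\chi_{T_j}$ by nonnegative weighted sums and $\#\T_j$ by the total weight; this is harmless and makes the fattening step below insensitive to multiplicities created when distinct $\delta$-tubes coincide after fattening.)

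\emph{Base case, $\delta\gtrsim\nu$.} By the geometric fact, $\sum_{T_j}\chi_{T_j}(\x)\le F_j(\pi_j^0(\x))$ where $F_j$ is a sum of indicators of $C\delta$-balls in $\R^{n_j}$, so $\|F_j\|_1\approx\delta^{n_j}\#\T_j$. Rescaling $\R^{n_j}$ by $(C\delta)^{-1}$ turns $F_j$ (after a routine majorization) into a function constant on the unit-cube lattice with $L^1$-norm $\lesssim\#\T_j$, and turns $[-1,1]^n$ into a cube of side $R\approx\delta^{-1}$, whence (\ref{wHBL})--(\ref{Cdef}), followed by Theorem~\ref{main}, give
\[
\int_{[-1,1]^n}\prod_j\Bigl(\sum_{T_j}\chi_{T_j}\Bigr)^{p_j}\ \lesssim\ \delta^{\,n}\,\text{\underline{BL}}(\vec{\pi_0},\vec{p},\delta^{-1})\prod_j(\#\T_j)^{p_j}\ \lesssim\ \delta^{\,n-\sigma}\prod_j(\#\T_j)^{p_j},
\]
i.e.\ $\mathrm{Kak}(\delta)\lesssim\delta^{-\sigma}$ for $\delta\gtrsim\nu$; one further recursion step below extends this to $\delta\gtrsim\nu^2$.

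\emph{Self-improvement.} Tile $[-1,1]^n$ by cubes $Q$ of side $\rho\in[\delta,1]$. Rescaling $Q$ to the unit cube turns the $\delta$-tubes meeting $Q$ into $(\delta/\rho)$-tubes of the same directions, so $\int_Q(\cdots)\le\rho^{\,n}\,\mathrm{Kak}(\delta/\rho)(\delta/\rho)^{\,n}\prod_j(\#\T_j^{(Q)})^{p_j}$ with $\T_j^{(Q)}=\{T_j\in\T_j:T_j\cap Q\neq\emptyset\}$. Since $T_j\cap Q\neq\emptyset$ forces $Q\subset T_j^{C\rho}$, we get $\#\T_j^{(Q)}\le\inf_Q\sum_{T_j}\chi_{T_j^{C\rho}}$, and summing over $Q$,
\[
\int_{[-1,1]^n}(\cdots)\ \le\ \delta^{\,n}\,\mathrm{Kak}(\delta/\rho)\,\rho^{-n}\int_{[-1,1]^n}\prod_j\Bigl(\sum_{T_j}\chi_{T_j^{C\rho}}\Bigr)^{p_j}.
\]
The last integral is a single-scale quantity for the $C\rho$-fattened tubes: if $\rho\ge\nu$, the base-case computation at scale $C\rho$ bounds it by $\lesssim\rho^{\,n-\sigma}\prod_j(\#\T_j)^{p_j}$ through Theorem~\ref{main}, while for $\rho<\nu$ it is at most $C\,\mathrm{Kak}(C\rho)(C\rho)^{\,n}\prod_j(\#\T_j)^{p_j}$ straight from the (weighted) definition of $\mathrm{Kak}$. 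Taking $\rho=\delta^{1/2}$ yields, with $C=C(\vec{\pi_0},\vec{p},n)$,
\[
\mathrm{Kak}(\delta)\ \le\ C\,\delta^{-\sigma/2}\,\mathrm{Kak}(\delta^{1/2})\quad(\delta\ge\nu^2),\qquad\qquad \mathrm{Kak}(\delta)\ \le\ C\,\mathrm{Kak}(\delta^{1/2})^2\quad(\delta<\nu^2).
\]

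\emph{Iteration, and the obstacle.} Unwinding these recursions from scale $\delta$ up to scale $\approx1$ uses only $m=O(\log\log(1/\delta))$ halvings of $\log(1/\cdot)$; the per-step constants then accumulate to $C^{\,m}=(\log(1/\delta))^{O(1)}$, the gains $\delta^{-\sigma/2}$ telescope to $\delta^{-\sigma+o(1)}$, and $\mathrm{Kak}$ at the top scale is $O(1)$, so that — combined with the base case and Theorem~\ref{main} — one obtains $\mathrm{Kak}(\delta)\lesssim(\log(1/\delta))^{O(1)}\delta^{-\sigma}\lesssim_\epsilon\delta^{-\sigma-\epsilon}$, which unwinds to the theorem. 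The delicate point, and the place where the argument most easily goes wrong, is exactly this bookkeeping: one must arrange the iteration so that the constants picked up across scales spoil the exponent by only an arbitrarily small amount rather than by a fixed power of $\delta$. This is why the scales are halved multiplicatively ($\rho=\delta^{1/2}$) rather than decremented by $\nu$ — the latter needs $\sim\log(1/\delta)$ steps and so costs a genuine $\delta^{-c(\nu)}$ — and it is also why the range $\delta<\nu^2$ must be handled separately, since there a $\delta^{1/2}$-fattened tube is no longer thin relative to the perturbation, its $\pi_j^0$-shadow stops shrinking, and the clean geometric gain $\rho^{\,n-\sigma}$ supplied by Theorem~\ref{main} must be replaced by the submultiplicative form above.
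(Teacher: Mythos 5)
Your single-scale input (Theorem \ref{main} applied to the $\pi_j^0$-shadows at scales $\delta\gtrsim\nu$) and your tiling/fattening recursion are sound, and up to that point you are essentially running the same induction-on-scales scheme as the paper's Proposition \ref{Kakstep1}. The gap is in the iteration bookkeeping, which you yourself flag as the delicate point. For $\delta<\nu^2$ you only have the two-factor recursion $\mathrm{Kak}(\delta)\le C\,\mathrm{Kak}(\delta^{1/2})^2$, and unwinding it does not accumulate constants like $C^m$ with $m=O(\log\log(1/\delta))$: each substitution squares all constants coming from deeper levels, so after $m$ halvings of $\log(1/\delta)$ one gets
\[
\mathrm{Kak}(\delta)\ \le\ C^{2^m-1}\,\mathrm{Kak}\big(\delta^{2^{-m}}\big)^{2^m},
\]
and since you must iterate until $\delta^{2^{-m}}\approx\nu^2$, i.e.\ $2^m\approx\log(1/\delta)/\log(1/\nu^2)$, the accumulated factor is $C^{2^m}\approx\delta^{-\log C/\log(1/\nu^2)}$ (likewise the base-case constant $C_0$ at scale $\nu^2$ enters to the power $2^m$). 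The powers of $\delta$ do telescope to $\delta^{-\sigma}$, but the conclusion is $\mathrm{Kak}(\delta)\lesssim\delta^{-\sigma-c(\nu)}$ with a fixed $c(\nu)=\log(CC_0)/\log(1/\nu^2)>0$, not $\delta^{-\sigma-\epsilon}$ for every $\epsilon$. Making $c(\nu)<\epsilon$ forces $\nu$ to shrink with $\epsilon$, which violates the quantifier order in Theorem \ref{genKak} ($\nu$ is fixed before $\epsilon$). The same obstruction appears for any choice of $\rho$: with only the $\vec{\pi}_0$-based single-scale estimate, available only at scales $\gtrsim\nu$, every step below scale $\nu$ produces either two $\mathrm{Kak}$ factors (squaring the constants) or, with $\rho=\nu$, a per-step constant repeated $\sim\log(1/\delta)$ times; either way the loss is a fixed power of $\delta^{-1}$ depending on $\nu$.

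The paper escapes this with two ingredients absent from your proposal. First, in Proposition \ref{Kakstep1} the angular tolerance of the tubes is taken equal to the scale ratio $\w$, so inside each cube of side $\delta/\w$ the tubes lie within $O(\delta)$ of translates of the fixed $V_j$ and the regularized Brascamp--Lieb bound is available at every level for the perturbed data $\vec{\pi}$, with its exponent controlled by the stability result Theorem \ref{techK}; there is then only one recursion factor per step, the per-step constant is $C\kappa$, and choosing $\w^{-\epsilon}=C\kappa$ turns the $\ell\approx\log(1/\delta)/\log(1/\w)$ accumulated constants into exactly $\delta^{-\epsilon}$ --- at the price that the admissible perturbation $\nu(\epsilon,\vec{\pi})\approx\w$ depends on $\epsilon$. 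Second, the deduction of Theorem \ref{genKak} requires a compactness/pigeonholing step: cover the (uniform) $\nu$-neighborhood of $(V_1^0,\ldots,V_J^0)$ in the product Grassmannian by finitely many $\nu_\epsilon$-cells, sort each $\T_j$ according to the cell containing its direction, and apply Proposition \ref{Kakstep1} with a nearby center to each of the $O_\epsilon(1)$ resulting sub-families, absorbing the cell count into $C_\epsilon$. Without these two steps (or a substitute for them), your scheme proves only the weaker fixed-power-loss statement, or the $\epsilon$-loss statement with $\nu=\nu(\epsilon)$.
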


Using a small variation of the induction-on-scales method of Guth \cite{guth}, Theorem \ref{genKak} follows from Theorem 1  and the locally-bounded result Theorem \ref{techK} below, which follows from work in \cite{stabBL}.  Theorem \ref{techK} shows that the asymptotic growth rate in $R$ of $\text{\underline{BL}}(\vec{\pi},\vec{p},R)$ is stable under perturbation of $\vec{\pi}$. A similar stability for the implicit constant $C$ in the upper bound of Theorem \ref{main} is less clear, but turns out not to be necessary for proving Theorem \ref{genKak}. 

The following theorem is a locally uniform bound for $\text{\underline{BL}}(\vec{\pi},\vec{p},R)$, but which only achieves the expected sharp growth rate in $R$ for certain exponents $p_j$. Let $n_j=\dim (V_j^0)^\perp$ and for each $r\in\{1,\ldots,n\}$, $\mc{J}_r=\{j\in\{1,\ldots,J\}:n_j\ge r\}$. 
\begin{theorem} \label{locbd}
Fix $\vec{p}\in[0,1]^J$ and $J$ orthogonal projections $\pi_j^0$ from $\R^n$ onto $(V_j^0)^\perp$. Then there exist $\nu>0$ and $C_0>0$ such that 
\[ \text{\emph{\underline{BL}}}(\vec{\pi},\vec{p},R)\le C_0R^{\sum\limits_{r=1}^n\max(1-\sum\limits_{j\in\mc{J}_r}p_j,0)}  \]
holds for all orthogonal projections $\pi_j:\R^n\to V_j^\perp$ where $V_j$ is an $n_j'$-dimensional subspace of $\R^n$ which is within a distance $\nu$ of $V_j^0$.
\end{theorem}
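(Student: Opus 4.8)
The plan is to prove, by induction on the ambient dimension, a statement slightly more general than Theorem \ref{locbd}: for every integer $N\ge 0$ and every $J$-tuple $\vec{\sigma}^0=(\sigma_j^0)$ of surjective linear maps $\sigma_j^0:\R^N\to\R^{m_j}$, there exist $\nu_0>0$ and $C_0>0$ (depending on $\vec{\sigma}^0,\vec p,N$) such that
\[\text{\underline{BL}}(\vec{\sigma},\vec p,R)\le C_0\,R^{\sum_{r=1}^N\max\left(1-\sum_{j:\,m_j\ge r}p_j,\;0\right)}\qquad(R\ge 1)\]
whenever $\|\sigma_j-\sigma_j^0\|<\nu_0$ for all $j$. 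Theorem \ref{locbd} is the case $N=n$, $m_j=n_j$, with $\vec{\sigma}^0=(\pi_j^0)$: an orthogonal projection onto $V_j^\perp$ with $V_j$ within $\nu$ of $V_j^0$ is within $O(\nu)$ of $\pi_j^0$ (as an operator on $\R^n$), and $\{j:n_j\ge r\}=\mc{J}_r$. Throughout we use the elementary one-variable bound $\int_{-R}^R\prod_j g_j(t)^{p_j}\,dt\le C_J\,R^{\max(1-\sum_j p_j,0)}\prod_j\|g_j\|_{L^1(\R)}^{p_j}$ for nonnegative $g_j$ constant on the unit intervals of $\Z$, which follows from H\"older's inequality, treating separately $\sum_j p_j\ge1$ and $\sum_j p_j<1$ (inserting a dummy exponent in the latter and using that there are $\lesssim R$ relevant intervals). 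This already handles the base case $N=1$, since each $\sigma_j$ is then multiplication by a scalar kept bounded away from $0$ and $\infty$ by the closeness hypothesis ($N=0$ is trivial).

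For the inductive step, fix $N\ge 2$ and $\vec{\sigma}^0$, and choose once and for all a unit vector $\omega\in\R^N$ at distance at least some $\delta_0>0$ from each of the finitely many proper subspaces $(\ker\sigma_j^0)^\perp$ with $m_j<N$; after shrinking $\nu_0$, the same separation (with $\delta_0/2$) holds for every admissible $\vec{\sigma}$. The key linear-algebra point is that the restriction $\rho_j:=\sigma_j|_{\omega^\perp}$ is then a surjection onto $\R^{m_j}$ of rank $\min(m_j,N-1)$ for every $j$: rank $m_j$ when $m_j<N$ (because $\omega\notin(\ker\sigma_j)^\perp$ forces $\dim(\ker\sigma_j\cap\omega^\perp)=\dim\ker\sigma_j-1$), and rank $N-1$ when $m_j=N$ (where $\sigma_j$ is invertible, of bounded distortion). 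Thus exactly the indices with $m_j=N$ — those contributing the $r=N$ term — lose a dimension, while the others keep rank $m_j$, which makes the exponents telescope correctly.

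Now write $\x=\x''+t\omega$ with $\x''\in\omega^\perp\cong\R^{N-1}$, $t\in\R$, and apply Fubini. For fixed $t$ the slice integrand is $\prod_j f_j(\rho_j\x''+t\sigma_j\omega)^{p_j}$; for $m_j<N$ this equals $G_j^{(t)}(\rho_j\x'')^{p_j}$ with $G_j^{(t)}$ constant on unit cubes of $\R^{m_j}$ and $\|G_j^{(t)}\|_1\lesssim\|f_j\|_1$ (a translate of $f_j$, dominated by its unit-cube envelope), and for $m_j=N$ it is the restriction of the unit-cube-constant function $\y\mapsto f_j(\sigma_j\y)$ to a fixed tilted affine hyperplane, dominated by a unit-cube-constant function $h_j^{(t)}$ on $\R^{N-1}$ with $\int_\R\|h_j^{(t)}\|_{L^1(\R^{N-1})}\,dt\lesssim\|f_j\|_1$ (a linear change of variables; the envelope loss is a constant since $\omega$ is a unit vector and the $\sigma_j$ have bounded distortion). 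The slice of $[-R,R]^N$ lies in a box of side $\lesssim R$, and the reduced tuple $\vec{\rho}=(\rho_j)$ — which is within the needed distance of the fixed tuple $(\sigma_j^0|_{\omega^\perp})$ once $\nu_0$ is small — has dimension sequence $m_j\mapsto\min(m_j,N-1)$, for which $\{j:\min(m_j,N-1)\ge r\}=\{j:m_j\ge r\}$ when $r\le N-1$. Applying the inductive hypothesis in $\R^{N-1}$ for each fixed $t$ bounds the inner integral by $C\,R^{\sum_{r=1}^{N-1}\max(1-\sum_{m_j\ge r}p_j,0)}$ times $\prod_j$ of the slice $L^1$-masses; the masses with $m_j<N$ are independent of $t$, while those with $m_j=N$ are, up to a dimensional constant, constant on unit $t$-intervals with total mass $\lesssim\|f_j\|_1$, so integrating in $t$ over $|t|\lesssim R$ and applying the one-variable bound to those factors supplies the missing factor $R^{\max(1-\sum_{m_j\ge N}p_j,0)}$. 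Collecting powers of $R$ and constants completes the induction.

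The part needing care is not any single estimate but the bookkeeping that keeps $C_0$ uniform over the whole $\nu_0$-neighborhood: the slicing direction $\omega$ must be chosen from the reference tuple $\vec{\sigma}^0$, not from $\vec{\sigma}$, so that after each of the $N$ reductions the data still lies in a fixed neighborhood of a fixed tuple and all incidental losses — replacing translated or tilted unit-cube decompositions by the standard one, comparing cube slices of $[-R,R]^N$ to axis-parallel boxes, and the one-variable H\"older step — are controlled by $N$ and $\vec{\sigma}^0$ alone. The second thing to check carefully is precisely the rank statement above: that restricting to a hyperplane generic with respect to $\vec{\sigma}^0$ drops the rank of exactly the factors with $m_j=N$ and of no others.
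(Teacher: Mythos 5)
Your proposal is correct and follows essentially the same route as the paper: a dimension-by-dimension reduction along directions chosen generically with respect to the reference tuple $\vec{\pi}_0$ (so the non-degeneracy persists uniformly over a fixed neighborhood, exactly the role of Lemma \ref{algo}), the one-dimensional H\"older base case from the proof of Proposition \ref{tech}, and unit-cube-envelope bookkeeping of the $L^1$ masses, with the exponent accumulating as $\sum_{r}\max(1-\sum_{j\in\mc{J}_r}p_j,0)$ because exactly the factors with $n_j\ge r$ remain active at the $r$-th reduction. The differences are organizational only: you induct on the ambient dimension, slicing by a hyperplane $\omega^\perp$ and harvesting the $r=N$ term from the transverse integral, whereas the paper fixes a full adapted basis once and integrates coordinates starting from $e_1$, harvesting the $r=1$ term first.
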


We remark that the power of $R$ in Theorem \ref{locbd} above matches that in Theorem \ref{main} (with $(\vec{\pi}_0,\vec{p})$) if $p_j\le\frac{1}{J}$ for all $j$, but can be larger in general. Indeed, a helpful reviewer pointed out the example in $\R^2$ for which $\pi_1(x,y)=x$, $\pi_2(x,y)=y$, and $\pi_3(x,y)=(x,y)$ with exponents $\vec{p}=(1/4,1,1/2)$. In this case the power of $R$ in Theorem \ref{main} is $R^{1/4}$ while in Theorem \ref{locbd} above, we have $R^{1/2}$. Also observe that Theorem \ref{genKak} above implies an $R^\e$-loss version of Theorem \ref{locbd} with the optimal growth rate $R^{\sup_{V\le\R^n}[\dim V-\sum_{j=1}^Jp_j\dim\pi_j^0(V)]}$. 

An earlier draft of this manuscript had a stronger version of Theorem \ref{locbd} which was not fully justified. The proof of Theorem \ref{genKak}, which was used in the later work \cite{decquad}, was edited in the present manuscript to no longer rely on Theorem \ref{locbd} at all (see \textsection \ref{K}). This adjustment was facilitated by private correspondence with the authors of \cite{decquad}, and I am particularly appreciative of Ruixiang Zhang for helping with the revised proof. 

In \textsection{\ref{lower}}, we describe the example given by BCCT in \textsection{5} of \cite{BCCT2}, which proves the lower bound in Theorem \ref{main}. The upper bound in Theorem \ref{main} follows as a corollary of the more technical Proposition \ref{tech} discussed in \textsection{\ref{upper}}. The proof of Proposition \ref{tech} follows the inductive arguments of BCCT in the proofs of Theorems 2.1 and 2.5 of \cite{BCCT2}. In \textsection{\ref{K}}, we discuss the proof of Theorem {\ref{genKak}} and Theorem \ref{locbd}. The author wishes to thank Larry Guth for bringing this problem to her attention and to thank Michael Christ for valuable conversations. 

\section{Funding}

The author was supported by an National Science Foundation Graduate Research Fellowship under Grant No. [DGE 1106400].

\section{Lower bound for $\text{\underline{{\normalfont BL}}}(\vec{\pi},\vec{p},R)$ \label{lower}}
\vspace*{.03in}

In this section, we describe the example given by BCCT in \textsection 5 from \cite{BCCT2}. We use this example to demonstrate that there exists $c>0$ satisfying
\[ c\underset{V\le \R^n}{\sup}R^{\dim V-\sum\limits_{j=1}^Jp_j\dim\pi_j(V)} \le \text{\underline{BL}}(\vec{\pi},\vec{p},R) . \]

\begin{proof}[Proof of the lower bound for $\text{\underline{\normalfont BL}}(\vec{\pi},\vec{p},R)$ from Theorem \ref{main}]

Fix a vector $\vec{p}\in[0,1]^J$ and surjective linear maps $\pi_j:\R^n\to\R^{n_j}$. Let $V\le \R^n$ be a subspace. For a subspace $W\subset\R^k$ let $P_W:\R^k\to W$ denote linear projection onto $W$. Define the collections 
\[ S_{j}=\{\v\in\Z^{n_j}:|P_{\pi_j(V)}(\v)|\le R+\sqrt{n}\quad\text{and}\quad |P_{\pi_j(V)^\perp}(\v)|\le 1+\sqrt{n}\} \]
and let $f_j$ be the indicator function of the set $\underset{\v \in S_j}{\cup} (\v+Q_j)$, where $Q_j=[0,1)^{n_j}$. 

Let $c_0>0$ be a constant we will define independently of $R$. Define the set $S\subset\R^n$ by 
\[ S=\{\x\in V:|P_V(\x)|\le c_0R\quad\text{and}\quad |P_{V^\perp}(\x)|\le c_0\}. \]
Let $|\pi_j|$ be the operator norm of $\pi_j$ so that $|\pi_j(\x)|\le |\pi_j||\x|$ for all $\x\in\R^n$ and $j=1,\ldots,J$. Choose $c_0=\min(\frac{1}{2|\pi_1|},\ldots,\frac{1}{2|\pi_J|},\frac{1}{\sqrt{2}})$, which guarantees that $S\subset[-R,R]^d$. Now verify that if $\x\in S$, then $f_j(\pi_j(\x))=1$ for all $j=1,\ldots,J$: Write $\x\in\R^n$ uniquely as $\x={\bf{v}}+{\bf{w}}$ where ${\bf{v}}\in V$ and ${\bf{w}}\in V^\perp$. By the definition of $S$, $|{\bf{v}}|\le c_0R$ and $|{\bf{w}}|\le c_0$, which implies that $|\pi_j({\bf{v}})|\le|\pi_j||{\bf{v}}|\le \frac{1}{2}R$ and $|\pi_j({\bf{w}})|\le |\pi_j||{\bf{w}}|\le \frac{1}{2}$. Then \[|P_{\pi_j(V)}(\pi_j(\x))|\le |\pi_j(\v))|+|P_{\pi_j(V)}(\pi_j({\bf{w}}))|\le R \]
and 
\[|P_{\pi_j(V)^\perp}(\pi_j(\x))|= |P_{\pi_j(V)^\perp}(\pi_j({\bf{w}}))|\le 1. \]
It follows from these displayed inequalities that  $\pi_j(\x)\in\underset{\v\in S_j}{\cup}(\v+Q_j)$, which 
holds for all $j=1,\ldots,J$. Putting everything together, we have
\begin{align*}
    Cc_0^nR^{\dim V}&\le  \int_{S}\prod_{j=1}^J(1)^{p_j}d\x\le  \int_{[-R,R]^n}\prod_{j=1}^Jf_j(\pi_j(\x))^{p_j}d\x\\
    &\le \text{\underline{BL}}(\vec{\pi},\vec{p},R)\prod_{j=1}^J\left(\int_{\R^{n_j}}f_j(\x_j)d\x_j\right)^{p_j} 
\end{align*}   
for a constant $C$ which depends on the dimension $n$. Using the inclusion 
\[ \underset{\v\in S_j}{\cup}(\v+Q_j) \subseteq\{\x_j\in\R^{n_j}:|P_{\pi_j(V)}(\x_j)|\le R+2\sqrt{n}\quad\text{and}\quad|P_{\pi_j(V)^\perp}(\x_j)|\le 1+2\sqrt{n}\},\]
and that $f_j$ is the indicator function of $\underset{\v\in S_j}{\cup}(\v+Q_j)$, obtain the final inequality
\[ Cc_0^{n}R^{\dim V}\le \text{\underline{BL}}(\vec{\pi},\vec{p},R)\left(\int_{\R^{n_j}}f_j(\x_j)d\x_j\right)^{p_j}\le  \tilde{C}^{p_1+\cdots+p_J}\text{\underline{BL}}(\vec{\pi},\vec{p},R) \prod_{j=1}^J R^{p_j\dim \pi_j(V)} \]
where $\tilde{C}$ is a constant depending on $n$. 
\end{proof}

\section{Upper bound for $\text{\underline{BL}}(\vec{\pi},\vec{p},R)$ \label{upper} }

The upper bound from Theorem \ref{main} follows as a corollary to Proposition \ref{tech} below. The proof of Proposition \ref{tech} proceeds in an analogous way as the proofs of Theorems 2.1 and 2.5 in \cite{BCCT2}. We introduce some notation before we state the proposition. Let $H\subset\R^k$, $H_j\subset\R^{k_j}$ be subspaces and let $\pi_j:H\to H_j$ be surjective linear maps. For each $j=1,\ldots,J$, fix a subset $\mc{L}^0_j\subset \Z^{k_j}$ which satisfies $H_j\subset\underset{\v\in\mc{L}_j^0}{\cup}(\v+Q_j)$, where $Q_j=[0,1)^{k_j}$.  For a parameter $A>0$ and a function $f\in L^\infty(H_j)$, define the quantity
\begin{equation}  \|f\|_{A,\mc{L}_j^0}:=\sum_{\v\in\mc{L}_j^0}\|f\|_{L^\infty((\v+AQ_j)\cap H_j)},\end{equation}
where $AQ_j=[0,A)^{k_j}\subset\R^{k_j}$.


\begin{prop}\label{tech} Let $\vec{p}\in[0,1]^J$. Suppose $H$, $\{H_j\}$, $\{\mc{L}_j^0\}$, and $\vec{\pi}$ are given as above. Then there exist parameters $A_j\ge 1$, for $j=1,\ldots,J$, and a constant $C>0$ such that
\[ \int_{\{\x\in H:|\x|<R\}}\prod_{j=1}^Jf_j(\pi_j(\x))^{p_j}d\x\le C\sup_{V\le H}R^{\dim V-\sum\limits_{i=1}^Jp_i\dim\pi_i(V)}\prod_{j=1}^J\|f_j\|_{A_j,\mc{L}_j^0}^{p_j} \]
for all  $R\ge 1$ and all nonnegative, measurable functions $f_j:H_j\to[0,\infty)$. 
\end{prop}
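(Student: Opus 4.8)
The plan is to induct on the dimension $k=\dim H$, following the Bennett--Carbery--Christ--Tao scheme for Theorems 2.1 and 2.5 of \cite{BCCT2}. For the base case $k=1$, either some $\pi_j$ is an isomorphism of $H\cong\R$ onto $H_j\cong\R$ (in which case taking $V=H$ already gives the factor $R^{1-\sum p_j}$ after bounding the left side by a one-dimensional integral and using $f_j(\pi_j(\x))\le \|f_j\|_{\infty,\cL_j^0}$ on the relevant lattice cubes, with $A_j$ chosen large enough — roughly $A_j\sim|\pi_j|+1$ — to absorb the distortion of the unit lattice under $\pi_j^{-1}$), or the trivial bound suffices. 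The key preliminary reductions are: (i) by monotone convergence it suffices to treat bounded $f_j$ of compact support that are constant on a fine common refinement; (ii) by the defining $\|\cdot\|_{A_j,\cL_j^0}$-normalization we may assume $\|f_j\|_{A_j,\cL_j^0}=1$; (iii) we only need to bound the integral over $\{|\x|<R\}$, and we may decompose that region into unit lattice cubes of $H$ and bound $f_j(\pi_j(\x))$ by its sup over the $\pi_j$-image of each such cube, which is contained in a bounded union of the $(\v+A_jQ_j)\cap H_j$.

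For the inductive step, the heart of the argument is the standard dichotomy. \textbf{Case 1: there is a proper nonzero subspace $V\le H$ that is ``critical'' or ``subcritical'' for the data}, meaning $\dim V-\sum_j p_j\dim\pi_j(V)$ is at least as large as the corresponding quantity for every subspace of $H$ (so it realizes the supremum, or comes within a constant of it). Then one slices $H$ along $V$: write points of $H$ as $(\v,\x')$ with $\v$ ranging over (a lattice in) $V$ and $\x'$ over $H/V$, apply Fubini, apply the inductive hypothesis on $V$ for the inner integral (with the restricted maps $\pi_j|_V$ and an appropriately enlarged lattice parameter $A_j'$), and then apply the inductive hypothesis on the quotient $H/V\cong H'$ of dimension $k-\dim V$ for the outer integral, using the induced maps $\bar\pi_j:H/V\to H_j/\pi_j(V)$. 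The exponents add: the $R$-power from the inner step is $\sup_{W\le V}(\dim W-\sum p_j\dim\pi_j(W))$ and from the outer step $\sup_{\bar W\le H/V}(\dim\bar W-\sum p_j\dim\bar\pi_j(\bar W))$, and using $\dim\pi_j(W)+\dim\bar\pi_j(\bar W)=\dim\pi_j(W+\text{lift of }\bar W)$ together with the criticality of $V$ one checks these combine to at most $\sup_{U\le H}(\dim U-\sum p_j\dim\pi_j(U))$. The norms multiply correctly because a sup over a product region factors, after enlarging the $A_j$ to account for the shearing that appears when one splits the lattice of $H_j$ compatibly with $\pi_j(V)$ — this enlargement is finite and depends only on $\vec\pi$. \textbf{Case 2: no such proper subspace exists}, i.e. every proper nonzero $V$ is ``supercritical'': $\dim V-\sum p_j\dim\pi_j(V)<\dim H-\sum p_j\dim\pi_j(H)$ strictly (and $\dim\pi_j(H)=n_j$ by surjectivity). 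Then the supremum on the right is attained at $V=H$ (or $V=0$), and one runs the BCCT tensor-power/Brascamp--Lieb-type argument directly on $H$: here the gap in Case 2 gives room to absorb the $R$-dependence into the single power $R^{\dim H-\sum p_j n_j}$, which is exactly the claimed bound with $V=H$.

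The main obstacle I expect is bookkeeping the lattice parameters $A_j$ through the induction: each time we slice along a subspace $V$ we must split the unit cube lattice of $H_j$ into a lattice on $\pi_j(V)$ and one on $\pi_j(V)^\perp$ (or a complement), and the change of coordinates that does this is not lattice-preserving, so the sup-norm over a unit cube in the new coordinates is controlled by a sup-norm over an $O_{\vec\pi}(1)$-dilate of unit cubes in the original lattice; one must choose $A_j$ from the outset large enough (a constant depending only on $\vec\pi$ and the finitely many subspaces arising in the induction tree) that all these dilations are absorbed. The second delicate point is verifying the additivity of the exponent in Case 1 is in the right direction — i.e. that criticality of $V$ forces $\le$ rather than merely $\ge$ — which is precisely the place where the BCCT structural analysis of critical subspaces (their Lemma on how $\dim\pi_j$ behaves on sums, essentially submodularity of $V\mapsto \dim\pi_j(V)$) is needed; this is routine once set up but is where the real content of \cite{BCCT2} is imported. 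Everything else — Fubini, monotone convergence, reducing to lattice-constant functions, the base case — is routine.
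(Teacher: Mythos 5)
Your Case 1 (a proper critical subspace $W$ exists) is essentially the paper's argument: Fubini along $W\oplus W^\perp$, the inductive hypothesis on $W$ with the restricted maps, the translation $\pi_j(y)=u_j(y)+L_j(y)$ with $u_j(y)\in\pi_j(W)$ costing only a bounded enlargement of the lattice parameters, and then the inductive hypothesis applied to the quotient data $L_j=P_{\pi_j(W)^\perp}\circ\pi_j|_{W^\perp}$, where criticality of $W$ forces $\dim V\le\sum_i p_i\dim L_i(V)$ for all $V\le W^\perp$, so the outer power of $R$ is zero; your exponent bookkeeping and your concern about enlarging the $A_j$'s are exactly the steps the paper carries out (ending with $\|F_j\|_{A_j',\mc{L}_j'}\lesssim\|f_j\|_{A_j+A_j',\mc{L}_j^0}$). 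That half of your plan is sound and matches the paper.

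The genuine gap is Case 2. When no proper subspace attains the supremum, the paper does not ``absorb the $R$-dependence using the gap,'' and no tensor-power argument appears; supercriticality of all proper subspaces does not by itself yield the bound $R^{\,n-\sum_j p_jn_j}$, and you give no mechanism that would. What the paper actually does is a convexity argument in the exponents: it introduces the polytope $K$ of all $\vec p\in[0,1]^J$ with $n-\sum_i p_in_i\ge\dim V-\sum_i p_i\dim\pi_i(V)$ for every proper $V$, checks $K=\tilde K$ (the constraint $p_r\le 1$ is implied, via $V=\ker\pi_r$), writes the given $\vec p$ as a convex combination of the finitely many extreme points of $K$, and uses H\"older/interpolation to reduce to those extreme points, where the power of $R$ and the norms both combine correctly. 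At an extreme point at least one defining inequality is active: either a subspace constraint is an equality, which sends you back to Case 1, or some $p_r=0$, in which case that factor is dropped and one runs a separate induction on the number of functions $J$, with the $J=1$ base case done by a direct change of variables and H\"older. These two ingredients --- the extreme-point/interpolation reduction and the induction on $J$ at $p_r=0$ --- are the content your sketch is missing, and without them Case 2 does not close. A smaller omission: your dichotomy does not cover the situation $\sup_{V\le H}\big[\dim V-\sum_j p_j\dim\pi_j(V)\big]\le 0$ (supremum attained only at $V=0$), which the paper disposes of at the outset by citing Theorem 2.5 of \cite{BCCT2}; you should either do the same or note that the same induction proves it.
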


Granting this proposition, we first prove the upper bound from Theorem \ref{main}. 
\begin{proof}[Proof of the upper bound for $\text{\underline{\emph{BL}}}(\vec{\pi},\vec{p},R)$] 
Take $H=\R^n$, $H_j=\R^{n_j}$, and $\mc{L}_j^0=\Z^{n_j}$ in Proposition \ref{tech}. It follows from the proposition that there exist constants $A_j,C$ such that
\[ \int_{[-R,R]^n}\prod_{j=1}^Jf_j(\pi_j({\bf{x}}))d{\bf{x}}\le C\sup_{V\le \R^n}R^{\dim V-\sum\limits_{i=1}^Jp_i\dim\pi_i(V)}\prod_{j=1}^J\|f_j\|_{A_j,\Z^{n_j}}^{p_j}\]
for all $R\ge 1$ and all measurable functions $f_j:\R^{n_j}\to [0,\infty)$. Consider functions $f_j$ which are constant on cubes $\v+[0,1)^{n_j}$ where $\v\in\Z^{n_j}$. Then 
\begin{align*}
    \|f_j\|_{A_j,\Z^{n_j}}=\sum_{m\in\Z^{n_j}}\underset{x\in m+A_jQ_j}{\text{ess sup}}f_j(x) &\le \sum_{m\in\Z^{n_j}}\sum_{\substack{k\in\Z^{n_j}\\ k\in m+A_jQ_j}}f_j(k)\\
    &\le A_j^{n_j}\sum_{k\in\Z^{n_j}}f_j(k)=A_j^{n_j}\int_{\R^{n_j}}f_j(\x_j)d\x_j , 
\end{align*}
which finishes the proof.
\end{proof}

\begin{proof}[Proof of Proposition \ref{tech}] Without loss of generality, assume that $0<\|f_j\|_{1,\mc{L}_j^0}<\infty$ for all $j=1,\ldots,J$. If $\underset{V\le H}{\sup}[\dim V-\sum\limits_{j=1}^Jp_j\dim\pi_j(V)]\le 0$, then Proposition \ref{tech} follows from Theorem 2.5 in \cite{BCCT2}, so it suffices to assume that the supremum is positive. Fix the notation $n=\dim H$ and $n_j=\dim H_j$. We proceed by induction on the dimension $n$. 

Begin with the base case $n=1$. In this case, $n_j=1$ for all $j$. If $1\le \sum\limits_{j=1}^Jp_j$, then choose $\delta_j\in[0,p_j)$ so that $1=\sum\limits_{j=1}^J(p_j-\delta_j)$. 
Using the generalized H\"{o}lder's inequality,
\begin{align} 
\int_{\{x\in H:|x|<R\}}\prod_{j=1}^Jf_j(\pi_j(x))^{p_j}dx&\le\prod_{j=1}^J\left(\int_{H}f_j(\pi_j(x))^{p_j/(p_j-\delta_j)}dx\right)^{p_j-\delta_j} \nonumber \\
&= \prod_{j=1}^J|\pi_j|^{\delta_j-p_j}\left(\int_{H}f_j(x)^{p_j/(p_j-\delta_j)}dx\right)^{p_j-\delta_j} \nonumber 
\end{align} 
where $|\pi_j|$ is from the subsitution $u=\pi_j(x)$. For each $1\le j\le J$, 
\[ \int_Hf_j(x)^{p_j/p_j-\delta_j}dx\le \sum_{m\in\mc{L}_j^0}\|f_j\|_{L^\infty(m+[0,1))}^{p_j/(p_j-\delta_j)}\le \big(\sum_{m\in\mc{L}_j^0}\|f_j\|_{L^\infty(m+[0,1))}\big)^{p_j/(p_j-\delta_j)},\]
which finishes this case. 

The other case is that  $1>\sum\limits_{j=1}^Jp_j$. Let $s=1-\sum\limits_{j=1}^Jp_j$. Then by the generalized H\"{o}lder's inequality, 
\begin{align*} 
\int_{\{x\in H:|x|<R\}}\prod_{j=1}^Jf_j(\pi_j(x))^{p_j}dx&\le \left(\int_{\{x\in H:|x|<R\}}dx\right)^s\prod_{j=1}^J\left(\int_{H}f_j(\pi_j(x))dx \right)^{p_j} \\
&\le  (2R)^{1-\sum\limits_{i=1}^Jp_i}\prod_{r=1}^J|\pi_r|^{-p_r}\prod_{j=1}^J\|f_j\|_{1,\mc{L}_j^0}^{p_j}. 
\end{align*} 
This concludes the $n=1$ base case.

Next consider the case where the ambient dimension is $n>1$. Furthermore, assume we are in the subcase where there exists a proper subspace $W\le H$ such that
\[ \dim W-\sum_{i=1}^Jp_i\dim\pi_i(W)=\sup_{V\le \R^n}\left(\dim V-\sum_{i=1}^Jp_i\dim\pi_i(V)\right). \]
Define coordinates $\x=(x,y)\in W\oplus W^\perp=H$ and write
\begin{align}\label{2} \int_{\{x\in H:|\x|<R\}}\prod_{j=1}^Jf_j(\pi_j(\x))^{p_j}d\x\le \int_{W^\perp}\int_{\{x\in W:|x|<R\}}\prod_{j=1}^Jf_j(\pi_j(x)+\pi_j(y))^{p_j}dxdy . \end{align}
For each $j$, let $P_{\pi_j(W)}:H_j\to\pi_j(W)$ and $P_{\pi_j(W)^\perp}:H_j\to\pi_j(W)^\perp$ be the linear projections. Define the subsets $\mc{L}_j\subset\mc{L}_j^0$ to be the collections of $\v\in\mc{L}_j^0$ satisfying $|P_{\pi_j(W)^\perp}(\v)|\le \sqrt{k_j}$. Then $\mc{L}_j$ contains the collection of $\v\in\mc{L}_j^0$ such that $\v+Q_j$ has nonempty intersection with $\pi_j(W)$, from which it follows that $\underset{\v\in\mc{L}_j}{\cup}(\v+Q_j)$ covers $\pi_j(W)$. Since $\dim W<\dim H$, by the inductive hypothesis (using the linear maps $\left.\pi_j\right|_{W}$), there exist $C,A_j>0$ (depending on the maximum of $k,k_1,\ldots,k_J$, $\|\vec{\pi}\|$, and ${\vec{p}}$) such that
\begin{align}\label{3}  
\int_{\{W:|x|<R\}}\prod_{j=1}^Jf_j&(\pi_j(x)+\pi_j(y))^{p_j}dx \le CR^{\dim W-\sum\limits_{i=1}^Jp_i\dim \pi_i(W)}\prod_{j=1}^J\|f_j(\cdot+\pi_j(y))\|_{A_j,\mc{L}_j}^{p_j}    
\end{align} 
for all $y\in W^\perp$. 
Let $L_j=P_{\pi_j(W)^\perp}\circ\left.\pi_j\right|_{W^\perp}:W^\perp\to\pi_j(W)^\perp$. The maps $L_j$ are clearly surjective because $\pi_j(W\oplus W^\perp)=\pi_j(W)+\pi_j(W^\perp)=H_j=\pi_j(W)\oplus \pi_j(W)^\perp$. For each $y\in W^\perp$, define $u_j(y)\in \pi_j(W)$ by $u_j(y)=P_{\pi_j(W)}(\pi_j(y))$, so $\pi_j(y)=u_j(y)+L_j(y)$. Now analyze the quantities $\|f_j(\cdot+\pi_j(y))\|_{A_j,\mc{L}_j}$ on the right hand side of the displayed inequality above. 

\begin{align*}
    \|f_j(\cdot+\pi_j(y))\|_{A_j,\mc{L}_j}&=\|f_j(\cdot+u_j(y)+L_j(y))\|_{A_j,\mc{L}_j}\\
    &= \sum_{\v\in\mc{L}_j}\underset{\x_j\in (\v+A_jQ_j)\cap\pi_j(W)}{\text{ess sup}}f_j(\x_j+u_j(y)+L_j(y)) \\
    &= \sum_{\v\in\mc{L}_j+u_j(y)}\underset{\x_j\in (\v+A_jQ_j)\cap\pi_j(W)}{\text{ess sup}}f_j(\x_j+L_j(y)) 
\end{align*}
where in the last line, we used the fact that since $u_j(y)\in\pi_j(W)$, $\pi_j(W)+u_j(y)=\pi_j(W)$. Note that for each $\v\in\mc{L}_j$, the number of ${\bf{w}}\in\mc{L}_j$ which satisfy $({\bf{w}}+u_j(y)+A_jQ_j)\cap(\v+A_jQ_j)\not= \emptyset$ is controlled by a dimensional constant times $A_j^{k_j}$. This means that
\[ \sum_{\v\in\mc{L}_j+u_j(y)}\underset{\x_j\in (\v+A_jQ_j)\cap\pi_j(W)}{\text{ess sup}}f_j(\x_j+L_j(y)) \le CA_j^{k_j}\|f_j(\cdot+L_j(y))\|_{A_j,\mc{L}_j}. \]
Putting this together with (\ref{2}) and (\ref{3}), we have
\[ \int_{\{\x\in H:|\x|<R\}}\prod_{j=1}^Jf_j(\pi_j(\x))^{p_j}d\x\le \tilde{C}R^{\dim W-\sum\limits_{i=1}^Jp_i\pi_i(W)}\int_{W^\perp}\prod_{j=1}^J\|f_j(\cdot+L_j(y))\|_{A_j,\mc{L}_j}^{p_j} dy ,\]
where $\tilde{C}$ depends on dimensions, $A_j$, $\vec{\pi}$, and $\vec{p}$. The growth rate of $\tilde{C}$ is not a concern since the number of steps in our induction is finite. 
For $z\in\pi_j(W)^\perp$, define
\begin{align*} 
F_j(z):=\|f_j(\cdot+z)\|_{A_j,\mc{L}_j}.
\end{align*} 
It remains to bound
\[ \int_{W^\perp}\prod_{j=1}^JF_j(L_j(y))^{p_j}dy.  \]
Fix the subset $\mc{L}_j'\subset\mc{L}_j^0$ of $\v\in\mc{L}_j^0$ that satisfy $|P_{\pi_j(W)}(\v)|\le \sqrt{k_j}$. The subset $\mc{L}_j'$ has the property that $\underset{\v\in\mc{L}_j'}{\cup}(\v+Q_j)$ covers $\pi_j(W)^\perp$. Using that $L_j:W^\perp\to \pi_j(W)^\perp$ is surjective (discussed above), by the inductive hypothesis, there exist constants $C',A_j'$ such that
\begin{align} \label{noR} \int_{\{y\in W^\perp:|y|<R\}}\prod_{j=1}^JF_j(L_j(y))^{p_j}dy\le C'\sup_{V\le W^\perp} R^{\dim V-\sum\limits_{i=1}^Jp_i\dim L_i(V)}\prod_{j=1}^J\|F_j\|_{A_j',\mc{L}_j'}^{p_j}. \end{align}
For any subspace $V\subset W^\perp$, 
\[ \dim(V+W)=\dim(V)+\dim(W),\]
and for each $i$,
\[ \dim\pi_i(V+W)=\dim L_i(V)+\dim\pi_i(W). \]
Using the above equalities and the inequality
\[ \dim (V+W)-\sum_{i=1}^Jp_i\dim\pi_i(V+W)\le \dim W-\sum_{i=1}^Jp_i\dim\pi_i(W),\]
which holds because of the subcase we are in, conclude that $\dim V\le \sum\limits_{i=1}^Jp_i\dim L_i(V)$ for all $V\le W^\perp$. This means that the power of $R$ that appears on the right hand side of (\ref{noR}) is 0, and we may let $R$ tend to infinity on the right hand side to integrate over all of $W^\perp$. Summarizing our results for this subcase, we have
\[ \int_{\{\x\in H: |\x|<R\}}\prod_{j=1}^Jf_j(\pi_j(\x))^{p_j}d\x\le\tilde{C}R^{\dim W-\sum\limits_{i=1}^Jp_i\dim\pi_i(W)}\prod_{j=1}^J\|F_j\|_{A_j',\mc{L}_j'}^{p_j}. \]
Observe that for each $j$,
\begin{align}
\sum_{k\in\mc{L}_j'} \underset{y\in (k+A_j'Q_j)\cap\pi_j(W)^\perp}{\text{ess sup}}F_j(y)&=\sum_{k\in\mc{L}_j'}\underset{y\in (k+A_j'Q_j)\cap\pi_j(W)^\perp}{\text{ess sup}}\sum_{m\in\mc{L}_j}\underset{x\in(m+A_jQ_j)\cap\pi_j(W)}{\text{ess sup}}f_j(x+y)\nonumber\\
    &\le \sum_{\substack{k\in\mc{L}_j'\\m\in\mc{L}_j}}\underset{x+y\in(k+m+(A_j+A_j')Q_j)\cap H_j}{\text{ess sup}}f_j(x+y). \label{4}
\end{align}
For each $\v\in\mc{L}_j^0$, the number of $(k,m)\in\mc{L}_j'\times\mc{L}_j$ such that \[(k+m+(A_j+A_j')Q_j)\cap(\v+(A_j+A_j')Q_j)\cap H_j\not=\emptyset\]
is bounded by 
\[ \#\{(k,m):\max(|P_{\pi_j(W)^\perp}(\v)-k|,|P_{\pi_j(W)}(\v)-m|)<2\sqrt{k_j}(1+A_j+A_j')\},\]
which is controlled by a constant depending only on dimension, $A_j$, and $A_j'$. Also use the fact that $\underset{\v\in\mc{L}_j^0}{\cup}(\v+Q_j)$ covers $H_j$ to bound the quantity in (\ref{4}) by 
\[ C\sum_{\v\in\mc{L}_j^0}  \underset{\x_j\in(\v+(A_j+A_j')Q_j)\cap H_j}{\text{ess sup}}f_j(\x_j)=C\|f_j\|_{A_j+A_j',\mc{L}_j^0},  \]
which finishes this subcase.

Now suppose that the ambient dimension $n>1$ and that all proper subspaces $W\le H$ satisfy 
\[ n-\sum_{i=1}p_in_i>\dim W-\sum_{i=1}^Jp_i\dim\pi_i(W). \]
Consider the set $K$ of all $J$-tuples $\vec{p}=(p_1,\ldots,p_J)\in [0,1]^J$ such that 
\[ n-\sum_{i=1}^Jp_in_i\ge \dim V- \sum_{i=1}^Jp_i\dim\pi_i(V)\]
for all proper subspaces $V\subset\R^n$. Equivalently, $K$ equals the intersection 
\[ K=[0,1]^J\cap \left(\underset{V\le\R^n}{\bigcap}\{\vec{p}\in\R^J:n-\sum_{i=1}^Jp_in_i\ge \dim V- \sum_{i=1}^Jp_i\dim\pi_i(V)\} \right).\]
This is the intersection of $[0,1]^J$ with finitely many closed half-spaces (even though there are infinitely many subspaces $V$, there are finitely many vectors $(\dim\pi_i(V))_i$). Therefore $K$ has finitely many extreme points, and since $K$ is compact and convex, $K$ equals the convex hull of its extreme points. If we prove the result for the extreme points $\vec{p}_k$, an application of complex interpolation says that if $\vec{p}=\sum\limits_k\lambda_k\vec{p}_{k}$ where $\sum\limits_k\lambda_k=1$, then
\begin{align*}  
\int_{H\cap|\x|<R}\prod_{j=1}^Jf_j(\pi_j(\x))^{p_j}d\x&\le C\prod_{k}\sup_{V\le H}R^{\lambda_k(\dim V-\sum\limits_{i=1}^Jp_{k,i}\dim\pi_{i}(V))}\prod_{j=1}^J\|f_j\|_{A_j,\mc{L}_j^0}^{\lambda_kp_{k,j}} \\
&=  C\sup_{V\le H}R^{\dim V-\sum\limits_{i=1}^Jp_{i}\dim\pi_{i}(V)}\prod_{j=1}^J\|f_j\|_{A_j,\mc{L}_j^0}^{p_j},
\end{align*} 
so it suffices to consider the extreme points. 

Next we want to show that $K=\tilde{K}$ where 
\[\tilde{K}=[0,\infty)^J\cap \left(\underset{V\le\R^n}{\bigcap}\{\vec{p}\in\R^J:n-\sum_{i=1}^Jp_in_i\ge \dim V- \sum_{i=1}^Jp_i\dim\pi_i(V)\} \right). \]
Observe $K=[0,1]^J\cap\tilde{K}\subset\tilde{K}$. Consider $\vec{p}\in\tilde{K}$ and any index $r$. Then for the subspace $V=\ker\pi_r$, $\vec{p}$ satisfies
\[ n-\sum_{i=1}^Jp_in_i\ge \dim\ker\pi_r-\sum_{i=1}^Jp_i\dim\pi_i(\ker\pi_r).\]
This leads to
\[ n_r=n-\dim\ker\pi_r\ge \sum_{i:i\not=r}p_i(n_i-\dim\pi_i(\ker\pi_r))+p_rn_r\ge p_rn_r ,\]
which means $p_r\le 1$ and thus $\tilde{K}\subset K$.

If $\vec{p}$ is an extreme point of $K$, then at least one of the inequalities defining $K$ must be equality. If 
\[ n-\sum_{i=1}^Jp_in_i=\dim V-\sum_{i=1}^Jp_i\dim \pi_i(V)\]
for some proper subspace $V\le H $, then we are in subcase 1 above. The alternative is that $p_r=0$ for at least one $r\in\{1,\ldots,J\}$. In that case, we are trying to prove that there exist $C,A_j$ so that
\[ \int_{\{\x\in H:|\x|<R\}}\prod_{j:j\not=r}f_j(\pi_j(\x))^{p_j}d\x\le C\sup_{V\le H}R^{\dim V-\sum\limits_{i:i\not=r}p_i\dim \pi_i(V)}\prod_{j:j\not=r}\|f_j\|_{A_j,\mc{L}_j^0}^{p_j} .\]
This follows from induction on the number of factors $J$ with base case $J=1$. When $J=1$, for appropriate constants $c,C>0$ depending on $\pi_J$, 
\[ \int_{\{\x\in H :|\x|\le R\}}f_J(\pi_J(\x))^{p_J}d\x =  CR^{n-n_J}\int_{\{\x_J\in H_J:|\x_J|\le c R\}}f_J(\x_J)^{p_J}d\x_J. \]
By H\"{o}lder's inequality, this is bounded by
\[ \tilde{C}R^{n-n_J}R^{n_J(1-p_J)}\left(\int_{H_J}f_J(\x_J)d\x_J\right)^{p_j}\le\tilde{C}R^{n-p_Jn_J}\|f_J\|_{1,\mc{L}_J^0}^{p_J} \]
for an appropriate constant $\tilde{C}$ and where we used that $\underset{m\in\mc{L}_j^0}{\cup}(m+Q_j)$ covers $H_j$ in the final inequality. 

\end{proof}

\section{ A generalized multilinear Kakeya inequality \label{K}}

In order to prove Theorem \ref{genKak}, we first prove a stability condition in the form of Theorem \ref{techK}. Theorem \ref{genKak} is proved using the induction-on-scales approach of Guth \cite{guth}.

Begin with Theorem \ref{techK}, which records the local boundedness of the growth rate of $R$ in $\text{\underline{BL}}(\vec{\pi},\vec{p},R)$ as a function of $\vec{\pi}$. Theorem \ref{techK} states a version of the result in \textsection 2.1 in \cite{stabBL} and we reproduce their argument (and some notations) here for completeness. 

\begin{theorem}[\cite{stabBL}]\label{techK} Fix $\vec{p}\in[0,1]^J$ and $J$ orthogonal projections $\pi_j^0$ from $\R^n$ onto $(V_j^0)^\perp$. Then there exists $\nu>0$ such that 
\[  \sup_{V\le \R^n}\big[\dim V-\sum_{j=1}^Jp_j\dim\pi_j(V)\big]\le\sup_{W\le \R^n}\big[\dim W-\sum_{j=1}^Jp_j\dim\pi_j^0(W)\big] \]
holds for all orthogonal projections $\pi_j:\R^n\to V_j$ where $V_j$ is an $n_j'$-dimensional subspace of $\R^n$ which is within a distance $\nu$ of the fixed subspace $V_j^0$.

\end{theorem}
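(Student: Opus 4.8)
The plan is to prove this by a soft compactness argument (with no explicit value of $\nu$), following \cite{stabBL}: I would show that $\Phi(\vec\pi):=\sup_{V\le\R^n}\big[\dim V-\sum_{j=1}^Jp_j\dim\pi_j(V)\big]$ is upper semicontinuous at $\vec\pi^0$ along the family of admissible perturbations, which is exactly the asserted inequality. Two elementary observations would come first. The first is that $\Phi$ takes only finitely many values: for \emph{any} $J$-tuple of orthogonal projections and any subspace $V\le\R^n$, the vector $(\dim V,\dim\pi_1(V),\dots,\dim\pi_J(V))$ lies in the finite set $\{0,1,\dots,n\}^{J+1}$, so $\dim V-\sum_jp_j\dim\pi_j(V)$, and hence its supremum over $V$, ranges over a fixed finite subset of $\R$; in particular, for each fixed $\vec\pi$ the supremum defining $\Phi(\vec\pi)$ is attained. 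The second is that comparing a perturbed projection $\pi_j$ with $\pi_j^0$ is, once a subspace is fixed, governed by \emph{lower semicontinuity of rank}: since the set of matrices of rank $\ge r$ is open, $\mathrm{rank}(\pi_j^0|_{V^*})\le\liminf_k\mathrm{rank}(\pi_j^{(k)}|_{V^{(k)}})$ whenever $\pi_j^{(k)}\to\pi_j^0$ as operators and $V^{(k)}\to V^*$ in the Grassmannian (choose orthonormal frames of $V^{(k)}$ converging to one of $V^*$; the matrices of $\pi_j^{(k)}|_{V^{(k)}}$ then converge to that of $\pi_j^0|_{V^*}$). Here I use that closeness of $V_j$ to $V_j^0$ on the Grassmannian is the same as closeness of the orthogonal projections $\pi_j$ and $\pi_j^0$ as operators on $\R^n$.

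With these in hand I would argue by contradiction. If no $\nu$ works, there is a sequence $\vec\pi^{(k)}$ of admissible perturbations with $\vec\pi^{(k)}\to\vec\pi^0$ as operators and $\Phi(\vec\pi^{(k)})>\Phi(\vec\pi^0)$ for all $k$. By the finiteness above, after passing to a subsequence $\Phi(\vec\pi^{(k)})=M$ is constant with $M>\Phi(\vec\pi^0)$, and we may choose $V^{(k)}$ attaining it. Passing to a further subsequence, $\dim V^{(k)}=d$ and $\dim\pi_j^{(k)}(V^{(k)})=a_j$ become independent of $k$, and by compactness of $\mathrm{Gr}(d,\R^n)$ we may assume $V^{(k)}\to V^*$ with $\dim V^*=d$. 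Lower semicontinuity of rank then gives $\dim\pi_j^0(V^*)\le a_j$ for every $j$, so
\[\dim V^*-\sum_{j=1}^Jp_j\dim\pi_j^0(V^*)\ \ge\ d-\sum_{j=1}^Jp_ja_j\ =\ M\ >\ \Phi(\vec\pi^0),\]
contradicting the definition of $\Phi(\vec\pi^0)$ as a supremum over all subspaces. Hence a suitable $\nu>0$ exists. (It is essential that each $p_j\ge0$, so that $\dim\pi_j^0(V^*)\le a_j$ pushes the weighted sum in the favorable direction.)

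I expect the one genuine subtlety to be the \emph{uniformity of $\nu$ over all subspaces $V$}. For any single fixed $V$ one already has $\dim\pi_j(V)\ge\dim\pi_j^0(V)$ once $\pi_j$ is close enough to $\pi_j^0$, so the crude choice $W=V$ would suffice for that $V$; but "close enough" depends on $V$ — for instance when $V$ is itself (close to) the kernel of $\pi_j$, which moves under the perturbation — and no uniform threshold is available with $W=V$. The compactness argument circumvents exactly this: the competing subspace $W=V^*$ produced for $\vec\pi^0$ is not $V^{(k)}$ but its limit, i.e. the "corrected" subspace from which the perturbation has tilted away. Apart from this point, the proof is bookkeeping with integer dimensions, the only analytic inputs being compactness of the Grassmannian and lower semicontinuity of rank.
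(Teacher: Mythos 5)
Your argument is correct and is essentially the paper's proof in different packaging: the paper runs the same compactness-plus-rank-semicontinuity idea directly, covering the compact space of orthonormal $k$-frames by finitely many neighborhoods on which nonvanishing of the wedge products $\bigwedge_{i\in I_j}\pi_j(e_i)$ (the open condition $\mathrm{rank}\ge |I_j|$) persists for all nearby $\vec{\pi}$, and taking $\nu$ to be the minimum of the finitely many thresholds, whereas you reach the same comparison by contradiction via sequential compactness of the Grassmannian. Both hinge on exactly the ingredients you identify, compactness of the family of subspaces and lower semicontinuity of rank (with $p_j\ge 0$ giving the favorable sign), so there is no gap.
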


\begin{proof} Let $1\le k\le n$ and let $\mc{E}_k$ denote the compact set of all orthonormal sets ${\bf{e}}=\{e_1,\ldots,e_k\}$ of vectors in $\R^n$. Fix $k$ and let $\bf{e}\in\mc{E}_k$. For each $1\le j\le J$, choose a subset $I_j\subseteq \{1,\ldots,k\}$ satisfying $|I_j|=\dim\langle \pi_j^0(e_1),\ldots,\pi_j^0(e_k)\rangle)$, 
\[ k-\sum_{j=1}^Jp_j\dim\pi_j^0(\langle e_1,\ldots,e_k\rangle)=k-\sum_{j=1}^Jp_j|I_j|, \]
and 
\[ \bigwedge_{i\in I_j}\pi_j^0(e_i)\not= 0. \]
Since $(\vec{\pi},{\bf{e}'})\mapsto\bigwedge_{i\in I_j}\pi_j(e_i')\in\Lambda^{|I_j|}(\R^{n_j})$ is continuous for each $j$, there exists $\e(\bf{e}),\delta(\bf{e})>0$ such that $\bigwedge_{i\in I_j}\pi_j(e_i')\not=0$ for each $j$ whenever $\|\bf{e}'-\bf{e}\|<\e(\bf{e})$ and $\|\vec{\pi}-\vec{\pi}_0\|<\delta(\e)$. In particular, $\dim\langle \pi_j(e_1'),\ldots,\pi_j(e_k')\rangle)\ge |I_j|$ for each $j$, so 
\[ k-\sum_{j=1}^Jp_j\dim\pi_j(\langle e_1',\ldots,e_k'\rangle)\le k-\sum_{j=1}^Jp_j\dim\pi_j^0(\langle e_1,\ldots,e_k\rangle). \]
Recall that $\mc{E}_k$ is compact, so there exists a finite collection ${\bf{e}}^1,\ldots,{\bf{e}}^N\in\mc{E}_k$ so that the sets 
\[ \{{\bf{e}}'\in\mc{E}_k:\|{\bf{e}}'-{\bf{e}}^{\ell}\|<\e({\bf{e}}^{\ell})\}\]
with $\ell=1,\ldots,N$ cover $\mc{E}_k$. Finally, choosing $\nu=\min\{\delta({\bf{e}}^1,\ldots,{\bf{e}}^N\}$, conclude that if $\|\vec{\pi}-\vec{\pi}_0\|<\nu$ (in the sense of Theorem \ref{techK}), then for any ${\bf{e}}\in\mc{E}_k$, there is some ${\bf{e}}^{\ell}$ from compactness so that
\[ k-\sum_{j=1}^Jp_j\dim\pi_j(\langle e_1,\ldots,e_k\rangle)\le k-\sum_{j=1}^Jp_j\dim\pi_j^0(\langle e_1^\ell,\ldots,e_k^\ell\rangle). \]
Since the number of $k$ is bounded by dimension, Theorem \ref{techK} is proved.

\end{proof} 

Next, we prove a weaker version of Theorem \ref{genKak} in the form of the following proposition. This proposition follows the argument of Guth in \cite{guth}. It is similar to the argument described in \cite{stabBL}, except that we do not use any locally uniform control over the (regularized) Brascamp-Lieb constants. An analogous proof could have been used in \cite{stabBL} to prove their Theorem 1.2 without relying on their Theorem 1.1. 

\begin{prop} \label{Kakstep1} Fix $\vec{p}\in[0,1]^J$ and $J$ orthogonal projections $\pi_j^0$ from $\R^n$ onto $(V_j^0)^\perp$. There exists $\nu>0$ so that the following is true. Let $\pi_j:\R^n\to V_j^\perp$ be orthogonal projection maps, where the $V_j$ are $n_j'$-dimensional subspaces within $\nu$ of the $V_j^0$. For every $\e>0$, there exist $\nu(\e,\vec{\pi})>0$ and $C(\e,\vec{\pi})<\infty$ so that
\[ \int_{[-1,1]^n}\prod_{j=1}^J\big(\sum_{T_j\in\T_j}\chi_{T_j}\big)^{p_j}\le C(\e,\vec{\pi})\delta^{-\epsilon+n}\sup_{V\le \R^n}\delta^{-\dim V+\overset{J}{\underset{j=1}{\sum}} p_j\dim\pi_j^0(V)}\prod_{j=1}^J(\#\T_j)^{p_j} \]
holds for all finite collections $\T_j$ of $\delta$-neighborhoods of $n_j'$-dimensional affine subspaces of $\R^n$ which, modulo translations, are within a distance $\nu(\e,\vec{\pi})$ of the fixed subspace $V_j$. 
\end{prop}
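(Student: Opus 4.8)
The plan is to prove Proposition~\ref{Kakstep1} by induction on scales in the style of Guth~\cite{guth}, bootstrapping a sub-optimal bound into the claimed $\delta^{-\e}$-bound. Write $\mathrm{MK}(\delta)$ for the best constant such that
\[
\int_{[-1,1]^n}\prod_{j=1}^J\Bigl(\sum_{T_j\in\T_j}\chi_{T_j}\Bigr)^{p_j}
\le \mathrm{MK}(\delta)\,\delta^{n}\sup_{V\le\R^n}\delta^{-\dim V+\sum_j p_j\dim\pi_j^0(V)}\prod_{j=1}^J(\#\T_j)^{p_j}
\]
holds for all admissible tube families at scale $\delta$. A crude bound (e.g.\ from $\|\sum_{T_j}\chi_{T_j}\|_\infty\le\#\T_j$ and the trivial volume estimate, or from a single application of Theorem~\ref{main}/Proposition~\ref{tech} with cube-lattice-constant functions built from $\delta^{-1}$-rescalings of the tubes) shows $\mathrm{MK}(\delta)\le C\delta^{-\Lambda}$ for some finite $\Lambda$ depending on $\vec{\pi}$; this is the base of the bootstrap. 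The goal is the self-improving inequality
\[
\mathrm{MK}(\delta)\le C_0\,\mathrm{MK}(\delta^{1/2})^{?}\quad\text{(suitably normalized)},
\]
which iterated from the crude bound drives the exponent of $\delta$ down to any $\e>0$.

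The core step is the two-scale decomposition. Partition $[-1,1]^n$ into a grid of cubes $q$ of side length $\delta^{1/2}$. On each such cube, rescale by $\delta^{-1/2}$: a tube $T_j$ of radius $\delta$ and the relevant affine direction, intersected with $q$ and dilated, becomes (up to the admissible perturbation) a tube of radius $\delta^{1/2}$ in direction within $\nu$ of $V_j^0$, so the rescaled family is again admissible at scale $\delta^{1/2}$ and we may apply the scale-$\delta^{1/2}$ bound on $q$. This produces a bound on $\int_q$ in terms of $\mathrm{MK}(\delta^{1/2})$ and the local tube counts $\#\{T_j:T_j\cap q\ne\emptyset\}$. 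Summing over the $\sim\delta^{-n/2}$ cubes $q$ and then, on the coarse scale $\delta^{1/2}$, viewing each original tube $T_j$ as roughly a union of $\sim\delta^{-1/2}$ many $\delta^{1/2}$-cubes — i.e.\ applying the scale-$\delta^{1/2}$ bound again to the coarsened picture where each $\delta^{1/2}$-cube carries the weight $\#\{T_j: T_j\cap q\ne\emptyset\}$ — assembles the full estimate. The bookkeeping of how the supremum over subspaces $V$ behaves under this two-scale split is exactly what makes the exponents close: one needs $\dim V-\sum_j p_j\dim\pi_j^0(V)$ to be "superadditive" across the two scales in the right sense, which holds because it is the same linear-algebraic quantity appearing at both stages. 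The perturbation parameter $\nu(\e,\vec\pi)$ must be shrunk at each iteration to absorb the twisting of directions under rescaling on a cube, but since only $O(\log\log(1/\delta))$... in fact $O(\log(1/\delta))$ iterations of square-rooting are needed and the direction-drift per step is controlled, one can fix a single $\nu$ depending on $\e$ and $\vec\pi$ at the outset by taking the infimum of the finitely-many-per-decade requirements; here Theorem~\ref{techK} is used to guarantee that the admissibility condition (closeness to $V_j^0$) is stable enough that the relevant subspace quantity does not increase under perturbation.

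Concretely the steps, in order: (i) set up $\mathrm{MK}(\delta)$ and record a finite crude bound $\mathrm{MK}(\delta)\le C\delta^{-\Lambda}$; (ii) prove the two-scale recursion $\mathrm{MK}(\delta)\le C_0\,\mathrm{MK}(\delta^{1/2})$ (note the exponent on the right is $1$, not a power $>1$, because the quantity $\delta^{n-\dim V+\sum p_j\dim\pi_j^0 V}$ multiplies correctly across scales — this is the crucial normalization that makes iteration converge rather than blow up), via the grid-of-$\delta^{1/2}$-cubes argument and the change of variables on each cube, using that admissibility is preserved up to shrinking $\nu$; (iii) iterate: $\mathrm{MK}(\delta)\le C_0^{k}\mathrm{MK}(\delta^{2^{-k}})\le C_0^k\cdot C\delta^{-2^{-k}\Lambda}$, and choose $k\sim\log_2(\Lambda/\e)$ so that $2^{-k}\Lambda\le\e$, giving $\mathrm{MK}(\delta)\le C_\e\delta^{-\e}$ since $C_0^k$ is a $\delta$-independent constant $C_\e$; (iv) unwind the definition of $\mathrm{MK}(\delta)$ to obtain the statement, and track the dependence of $\nu$ on $\e$ and $\vec\pi$ through the finitely many applications of step (ii).

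The main obstacle is step (ii): making the two-scale recursion have the clean form $\mathrm{MK}(\delta)\le C_0\,\mathrm{MK}(\delta^{1/2})$ requires that the supremum $\sup_V\delta^{-\dim V+\sum_j p_j\dim\pi_j^0(V)}$ factor compatibly between the fine scale (inside each $\delta^{1/2}$-cube) and the coarse scale (the grid of cubes), and that the rescaling on a cube send an admissible scale-$\delta$ configuration to an admissible scale-$\delta^{1/2}$ configuration without the direction perturbation accumulating beyond $\nu$. Handling the direction drift is where Theorem~\ref{techK} enters, ensuring the perturbed subspace quantity is dominated by the unperturbed $V_j^0$ one; and handling the exponent bookkeeping is a matter of carefully choosing, for each cube and each coarse weight, the near-optimal input functions (indicators of unions of unit cubes, as in \S\ref{lower}) so that the two applications of the scale-$\delta^{1/2}$ inequality compose to give exactly the scale-$\delta$ geometric factor. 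Everything else — the crude base bound, the iteration count, assembling $C_\e$ — is routine once (ii) is in place.
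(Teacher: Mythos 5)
There is a genuine gap at your step (ii), and it is fatal to the bootstrap as you have set it up. The two-scale decomposition you describe applies the scale-$\delta^{1/2}$ inequality \emph{twice}: once inside each $\delta^{1/2}$-cube (after rescaling the $\delta$-tubes to $\delta^{1/2}$-tubes) and once more to the coarsened picture in which each cube carries the weight $\#\{T_j:T_j\cap q\neq\emptyset\}$, i.e.\ to the fattened tubes $\tilde T_j$ of radius $\sim\delta^{1/2}$. Each of those applications costs a factor $\mathrm{MK}(\delta^{1/2})$, so the recursion you actually get is $\mathrm{MK}(\delta)\le C_0\,\mathrm{MK}(\delta^{1/2})^2$, not $\mathrm{MK}(\delta)\le C_0\,\mathrm{MK}(\delta^{1/2})$; the geometric factor $\delta^{n}\sup_V\delta^{-\dim V+\sum_jp_j\dim\pi_j^0(V)}$ does compose correctly across the two scales (which is the only thing your "normalization" remark establishes), but the unknown constant appears squared. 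With the quadratic recursion the iteration in (iii) gives $\mathrm{MK}(\delta)\le C_0^{2^k}\,\mathrm{MK}(\delta^{2^{-k}})^{2^k}\le (C_0C)^{2^k}\delta^{-\Lambda}$: the crude exponent $\Lambda$ is never improved, so no $\delta^{-\varepsilon}$ bound follows. The drift of directions, which you spend effort on, is not actually an issue (isotropic rescaling of a cube does not change the directions of affine subspaces), so Theorem \ref{techK} is being invoked for the wrong purpose in your outline.

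The missing idea — and the point of the paper — is that one of the two applications must be replaced by an estimate whose loss beyond the sharp geometric factor is a fixed constant, and this is exactly what the regularized Brascamp--Lieb inequality provides. In the paper's argument one partitions $[-1,1]^n$ into cubes of side $\delta/\omega$ for a \emph{fixed} ratio $\omega$; inside each cube the tubes (whose directions are within $\omega$ of $V_j$) are snapped to tubes exactly parallel to $V_j$ at the cost of an $O(\delta)$ fattening, and after rescaling by $\delta^{-1}$ the local integral is controlled by $\underline{\mathrm{BL}}(\vec{\pi},\vec{p},\omega^{-1})$, which by Theorem \ref{main} together with Theorem \ref{techK} is at most $C\sup_V\omega^{-\dim V+\sum_jp_j\dim\pi_j^0(V)}$ — sharp up to a constant $C\kappa$ independent of $\delta$ and $\omega$. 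The Kakeya constant at the coarser scale then enters only once per step, yielding the \emph{linear} recursion $D(\delta,\omega)\le C\kappa\,\omega^{n-\sum p_jn_j}\,\underline{\mathrm{BL}}(\vec{\pi},\vec{p},\omega^{-1})\,D(\delta/\omega,\omega)$. Iterating $\ell\sim\log(1/\delta)/\log(1/\omega)$ times and choosing $\omega^{-\varepsilon}=C\kappa$ turns the accumulated constant $(C\kappa)^{\ell}$ into $\delta^{-\varepsilon}$; this choice also produces the $\varepsilon$-dependent admissible perturbation $\nu(\varepsilon,\vec{\pi})=\omega$, which is needed precisely so that the snapping step costs only $O(\delta)$ inside a $\delta/\omega$-cube. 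Your proposal never uses Theorem \ref{main} inside the recursion (only as a crude base bound), which is why it cannot close; if you repair step (ii) by inserting the regularized BL estimate as the fine-scale input, the rest of your outline collapses into the paper's proof.
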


\begin{proof} Let $\epsilon>0$ be given and let $\nu>0$ be given by Theorem \ref{techK}. Suppose that $\pi_j:\R^n\to V_j^\perp$ are given as in the statement of Proposition \ref{Kakstep1}. 

We follow the multi-scale argument of Guth which analyzes the following quantity. Define $D(\delta,\w)$ to be the smallest constant in the inequality 
\begin{equation} \nonumber \int_{[-1,1]^n}\prod_{j=1}^J\big(\sum_{T_j\in\T_j}\chi_{T_j}\big)^{p_j}\le D_{}(\delta,\w)\prod_{j=1}^J(\delta^{n_j}\#\T_j)^{p_j} \end{equation}
where the $\T_j$ are arbitrary finite collections of $\delta$-neighborhoods of $\w$-perturbations of $V_j^{}$. The multi-scale relationship relates $D_{}(\delta,\w)$ to $D_{}(\delta/\w,\w)$.

Partition $[-1,1]^n$ into axis parallel cubes $Q$ of sidelength $\delta/\w$. Then
\begin{align}  
\int_{[-1,1]^n}\prod_{j=1}^J\big(\sum_{T_j\in\T_j}\chi_{T_j}\big)^{p_j}&=\sum_Q\int_Q\prod_{j=1}^J\big(\sum_{T_j\in\T_j}\chi_{T_j\cap Q}\big)^{p_j}\label{ok2} . 
\end{align} 
Let $T_j'$ be a $O(\delta)$-neighborhood of an affine subspace which is parallel to $V_j^{}$ and so that $T_j\cap Q\subset T'_j\cap Q$. For each $Q$,
\begin{align}
\int_{Q}\prod_{j=1}^J\big(\sum_{\substack{T_j\in\T_j\\T_j\cap Q\not=\emptyset}}\chi_{T_j'\cap Q}\big)^{p_j}&\le  \int_{Q}\prod_{j=1}^J\big(\sum_{\substack{T_j\in\T_j\\T_j\cap Q\not=\emptyset}}\chi_{\pi_j^{}(T_j'\cap Q)}\circ\pi_j^{} \big)^{p_j}\nonumber\\
&\le \delta^n \text{\underline{BL}}(\vec{\pi_{}},\vec{p},\w^{-1})\prod_{j=1}^J\big(\sum_{\substack{T_j\in\T_j\\T_j\cap Q\not=\emptyset}}\delta^{-n_j}|\pi_j^{}(T_j'\cap Q)|\big)^{p_j}. \label{ok}
\end{align}
where we used that $\delta^{-1}Q$ is a cube in $\R^n$ of sidelength $\w^{-1}$ and $\delta^{-1}T_j$ is a 1-neighborhood of an $n_j'$-dimensional affine subspace of $\R^n$, and so Theorem \ref{main} applies. Now let $\tilde{T}_j=T_j+B(0,c\delta/\w)$, where $c>0$ is large enough so that $T_j'\cap Q\not=\emptyset$ implies $Q\subset\tilde{T}_j$. Then for each $x_Q\in Q$, $\delta^{-n_j}|\pi_j^{}(T_j'\cap Q)|\lesssim \chi_{\tilde{T}_j}(x_Q)$, so
\[ \prod_{j=1}^J\big(\sum_{\substack{T_j\in\T_j\\T_j\cap Q\not=\emptyset}}\delta^{-n_j}|\pi_j^{}(T_j'\cap Q)|\big)^{p_j}\lesssim |Q|^{-1}\int_Q \prod_{j=1}^J\big(\sum_{\substack{T_j\in\T_j\\T_j\cap Q\not=\emptyset}}\chi_{\tilde{T}_j}\big)^{p_j} .    \]
Using this in (\ref{ok}) and with (\ref{ok2}) leads to 
\begin{align*}
    \int_{[-1,1]^n}\prod_{j=1}^J&\big(\sum_{T_j\in\T_j}\chi_{T_j}\big)^{p_j}\lesssim \delta^n\text{\underline{BL}}(\vec{\pi_{}},\vec{p},\w^{-1})\sum_Q(\delta/\w)^{-n}\int_Q\prod_{j=1}^J\big(\sum_{\substack{T_j\in\T_j}}\chi_{\tilde{T}_j}\big)^{p_j}    \\
    &= \w^{n}\text{\underline{BL}}(\vec{\pi_{}},\vec{p},\w^{-1})\int_{[-1,1]^n}\prod_{j=1}^J\big(\sum_{\substack{T_j\in\T_j}}\chi_{\tilde{T}_j}\big)^{p_j} \\
&\le \w^{n}\text{\underline{BL}}(\vec{\pi_{}},\vec{p},\w^{-1})D_{}(\delta/\w,\w)\prod_{j=1}^J\big((\delta/\w)^{n_j}\#\T_j)\big)^{p_j} 
\end{align*}
This proves that for a constant $\kappa>0$ which is allowed to depend on $\vec{\pi}$, 
\[ D(\delta,\w)\le \kappa \w^{n-\sum p_jn_j}\text{\underline{BL}}(\vec{\pi_{}},\vec{p},\w^{-1})D_{}(\delta/\w,\w). \]

Now use the multi-scale inequality. Iterate it $\ell$ times to obtain 
\begin{equation} \label{iteration} D_{}(\delta,\w)\le \kappa^\ell (\w^\ell)^{n-\sum p_jn_j}\text{\underline{BL}}(\vec{\pi_{}},\vec{p},\w^{-1})^\ell D_{}(\delta/\w^\ell,\w).\end{equation}
By Theorem \ref{main} and Theorem \ref{techK}, because $\vec{\pi_{}}$ is a $\nu$-perturbation of $\vec{\pi_0}$, there exists $C=C(\vec{\pi})$ so that 
\[ \text{\underline{BL}}(\vec{\pi_{}},\vec{p},\w^{-1})\le C_{}\sup_{V\le\R^n}\w^{-\dim V+\sum p_j\dim\pi_j^0(V)}.  \]
It follows from this and (\ref{iteration}) that
\begin{equation} \label{iteration2} 
D_{}(\delta,\w)\le (C_{}\kappa)^\ell (\w^\ell)^{n-\sum p_jn_j}\sup_{V\le\R^n}(\w^{-\ell})^{\dim V-\sum p_j\dim\pi_j^0(V)} D_{}(\delta/\w^\ell,\w).\end{equation}
Choose $\w$ so that $\w^{-\epsilon}=C_{}\kappa$ and take $\ell$ so that $\delta/\w^\ell\ge 1\ge \delta/\w^{\ell-1}$. It follows using the trivial bound $D(\delta/\w^\ell,\w)\le1 $ that
\begin{align*} 
D(\delta,\w)&\le \w^{-\e\ell} (\w^{\ell})^{n-\sum p_jn_j}\sup_{V\le\R^n}(\w^{-\ell})^{\dim V-\sum p_j\dim\pi_j(V)} \\
&\le \delta^{-\epsilon+n-\sum p_jn_j}\sup_{V\le\R^n}\delta^{-\dim V+\sum p_j\dim\pi_j^0(V)}.  
\end{align*} 
This proves the proposition, where $\nu(\epsilon,\vec{\pi})=(C\kappa)^{-\epsilon^{-1}}$.

\end{proof}

Finally, we prove Theorem \ref{genKak} using Proposition \ref{Kakstep1}. 

\begin{proof}[Proof of Theorem \ref{genKak}] Let $\epsilon>0$ be given and let $\nu=\frac{\nu'}{2}$, where $\nu'>0$ is given by Theorem \ref{techK} (which is the same $\nu'>0$ from Proposition \ref{Kakstep1}). This parameter $\nu$ depends on the fixed subspaces $V_j^0$. We finish the proof of Theorem \ref{genKak} using a compactness argument. 

Let $\pi_j^\theta:\R^n\to (V_j^\theta)^\perp$ denote orthogonal projection maps, where the $V_j^\theta$ are $n_j'$-dimensional subspaces within $\nu$ of the $V_j^0$. For each $\vec{\pi}_\theta$, let the parameters  $\nu(\e,\theta)=\nu(\e,\vec{\pi}_\theta)>0$ be given by Proposition \ref{Kakstep1} and write $R(\frac{\nu(\e,\theta)}{2},\vec{\pi}_\theta)$ to denote the open rectangle which is the product of $\frac{\nu(\e,\theta)}{2}$-neighborhoods of $V_j^\theta$ with respect to the Grassmanian metric on $n_j'$-dimensional subspaces. 

Let $K_j$ denote the closed $\nu$-neighborhood of $V_j^0$ and consider the $J$-fold product $K_1\times\cdots\times K_J$, which is compact. The open rectangles $R(\frac{\nu(\e,\theta)}{2},\vec{\pi}_\theta)$ cover $K_1\times\cdots\times K_J$, so we may extract a finite subcover indexed by $\theta_l$. Let $\nu_\e=\frac{1}{10}\underset{l}{\min} \,\nu(\e,\theta_l)>0$. For each $l$, cover $R(\frac{\nu(\e,\theta_l)}{2},\vec{\pi}_{\theta_l})$ by a finite set of $R(\nu_\e,\vec{\pi}_{\theta_l^m})\subset R(\frac{\nu(\e,\theta_l)}{2},\vec{\pi}_{\theta_l})$. 

The number of $\theta_l^m$ needed for the $R(\nu_\e,\vec{\pi}_{\theta_l^m})$ to cover $K_1\times\ldots\times K_J$ is determined by fixed dimensions, $\nu$, and the numbers $\nu(\epsilon,\theta)$. Since the set of $\nu(\epsilon,\theta)$ themselves depends on the fixed $V_j^0$, $\nu$, and $\epsilon$, this number of $\theta_l^m$ is an acceptable contribution to the $C_\epsilon$ constant in the statement of Theorem \ref{genKak}. 

We have covered $K_1\times\cdots \times K_J$ by at most $C_\e$ many $\nu_\e$-cubes, each of which is contained in some $R(\nu(\e,\theta_l),\vec{\pi}_{\theta_l})$ (where there are at most $C_\e$ many $l$). 
Write $\T_{j,\theta_l^m}$ for the collection of $T_j\in\T_j$ which are also in the $\nu_\e$-neighborhood of $V_j^{\theta_l^m}$. Then by pigeonholing,
\begin{align} 
\int_{[-1,1]^n}\prod_{j=1}^J\big(\sum_{T_j\in\T_j}\chi_{T_j}\big)^{p_j}&\lesssim_\epsilon \int_{[-1,1]^n}\prod_{j=1}^J\big(\sum_{T_j\in\T_{j,\theta_{l_j}^{m_j}}}\chi_{T_j}\big)^{p_j}.  \label{final}
\end{align} 
The tuple $(V_1^{\theta_{l_1}^{m_1}},\ldots,V_J^{\theta_{l_J}^{m_J}})$ is contained in $K_1\times\cdots\times K_J$, and therefore there is some $\theta_{l_0}^{m_0}$ so that $V_j^{\theta_{l_j}^{m_j}}$ is within $\nu_\e$ of $V_j^{\theta_{l_0}^{m_0}}$ for each $j$. Then $\T_{j,\theta_{l_j}^{m_j}}$ is a collection of $\delta$-neighborhoods of affines subspaces which are within $2\nu_\e$ of $V_j^{\theta_{l_0}^{m_0}}$ for each $j$. Since the rectangle of $\nu_\e$-neighborhoods of $V_j^{\theta_{l_0}^{m_0}}$ is contained in the rectangle of $\frac{\nu(\e,\theta_{l_0})}{2}$-neighborhoods of $V_j^{\theta_{l_0}}$, and by definition, $\nu_\e<\frac{\nu(\e,\theta_{l_0})}{2}$, conclude that $\T_{j,\theta_{l_j}^{m_j}}$ is a collection of affine subspaces within $\nu(\e,\theta_{l_0})$ of $V_j^{\theta_{l_0}}$ for each $j$. This means that we may control the right hand side of \eqref{final} by Proposition \ref{Kakstep1}. Since there are $C_\epsilon$ many $\theta_l^m$, we are done.

\end{proof}

\section{Some stability of $\underline{\text{{\normalfont BL}}}(\vec{\pi},\vec{p},R)$}

In this section, we describe a concrete stability result using the factoring-through-critical subspaces argument from \cite{BCCT}. In order to keep track of the implicit constant, we factor through one-dimensional subspaces at a time, sacrificing the probable optimal growth rate in $R$ except for special cases of exponents $\vec{p}$. 
Let $H$ be a vector space of dimension $n$, $V_j^0$ be $n_j'$-dimensional subspaces of $H$, and $n_j=n-n_j'$. 

\begin{notation} Let $V_j^0\subset\R^{n}$ be  nontrivial $n_j'-$dimensional subspaces and let $H_j^0=(V_j^0)^\perp$, which is $n_j$-dimensional.  
\end{notation}

\begin{notation} Let $V_j\subset\R^{n}$ also be $n_j'-$dimensional subspaces and write $H_j=V_j^\perp$, which is $n_j$-dimensional.
\end{notation}

\begin{definition}Suppose that $\pi_j^0:\R^n\to H_j^0$ and $\pi_j:\R^n\to H_j$ 
are orthogonal projection maps. For $\nu>0$, $\|\vec{\pi}-\vec{\pi}_0\|<\nu$ means that for each $j\in\{1,\ldots,J\}$, $V_j$ is within a distance $\nu$ of $V_j^0$ in the usual Grassmanian metric on $n_j'$-dimensional subspaces. 
\end{definition}

\begin{definition}
For each $j=1,\ldots,J$, fix a subset $\mc{L}^0_j\subset \Z^{n}$ which satisfies $H_j\subset\underset{\v\in\mc{L}_j^0}{\cup}(\v+Q)$, where $Q=[0,1)^{n}$.  For a subspace $W_j\subset H_j$ and a function $f\in L^\infty(H_j)$, define the quantity
\begin{equation} \label{norm} \|f\|_{W_j}:=\sum_{\v\in\mc{L}_j^0}\|f\|_{L^\infty((\v+Q)\cap W_j)}.\end{equation}

\end{definition}

\begin{notation} For vectors $w_1,\ldots,w_r\in\R^n$, let $\langle w_1,\ldots,w_r\rangle$ denote the linear span of $w_1,\ldots,w_r$. For each $j$, $\langle \pi_j^0(w_1),\ldots,\pi_j^0(w_r)\rangle^\perp$ will denote the orthogonal complement of $\langle w_1,\ldots,w_r\rangle$ in $H_j^0$ (and the same for $\pi_j,H_j$).

\end{notation}

\begin{lemma}\label{algo} Fix $\vec{\pi}_0$. There exists a basis $e_1,\ldots,e_n$ of $\R^n$, a small parameter $\nu>0$, and a constant $C>0$ such that the following holds. If $\vec{\pi}$ satisfies $\|\vec{\pi}_0-\vec{\pi}\|<\nu$, then for each $j\in\{1,\ldots,J\}$,
\[|\pi_j(e_1)|\ge C\]
and
\[ P_{\langle \pi_j(e_1),\ldots,\pi_j(e_i)\rangle^\perp}(\pi_j(e_{i+1}))=0\quad\text{or}\quad |P_{\langle \pi_j(e_1),\ldots,\pi_j(e_i)\rangle^\perp}(\pi_j(e_{i+1}))|\ge C \]
for all $i\in\{1,\ldots,n-1\}$. Furthermore, for each $j\in\{1,\ldots,J\}$ and $i\in\{2,\ldots,n\}$,
\begin{equation}\label{imp} \pi_j^0(e_i)\not\in\langle \pi_j^0(e_1),\ldots,\pi_j^0(e_{i-1})\rangle \implies \pi_j(e_i)\not\in\langle\pi_j(e_1),\ldots,\pi_j(e_{i-1})\rangle. \end{equation}

\end{lemma}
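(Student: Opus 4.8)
\textbf{Proof proposal for Lemma \ref{algo}.}

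The plan is to construct the basis $e_1,\ldots,e_n$ greedily, one vector at a time, by a compactness-and-finite-selection argument very much in the spirit of the proof of Theorem \ref{techK}. First I would observe that for a \emph{fixed} $\vec{\pi}_0$ one can pick an orthonormal basis $e_1^0,\ldots,e_n^0$ of $\R^n$ that is ``generic'' for $\vec{\pi}_0$ in the following sense: for every $j$ and every $i$, the vector $\pi_j^0(e_{i+1}^0)$ either lies exactly in $\langle\pi_j^0(e_1^0),\ldots,\pi_j^0(e_i^0)\rangle$ or is bounded away from it (this is automatic once the basis is chosen, with constant depending on $\vec{\pi}_0$), and moreover $|\pi_j^0(e_1^0)|>0$ for all $j$ since the $\pi_j^0$ are surjective hence nonzero on a generic line. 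Actually since we are free to choose the basis, genericity is not even needed — any orthonormal basis already satisfies the ``$=0$ or $\ge C_0$'' dichotomy for $\vec{\pi}_0$ with some $C_0=C_0(\vec{\pi}_0)>0$, because there are finitely many $(i,j)$ pairs and each quantity $|P_{\langle\pi_j^0(e_1^0),\ldots,\pi_j^0(e_i^0)\rangle^\perp}(\pi_j^0(e_{i+1}^0))|$ is a fixed nonnegative number.

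Next I would promote this to a neighborhood of $\vec{\pi}_0$ by continuity. The maps $\vec{\pi}\mapsto\pi_j(e_i)$ are continuous in $\vec{\pi}$ (on the Grassmannian, orthogonal projection depends continuously on the subspace), and the function $\vec{\pi}\mapsto |P_{\langle\pi_j(e_1),\ldots,\pi_j(e_i)\rangle^\perp}(\pi_j(e_{i+1}))|$ is \emph{lower} semicontinuous-ish in the relevant way: more precisely, I would argue by downward induction on $i$ that whenever a collection $\pi_j(e_1),\ldots,\pi_j(e_i)$ stays within a small perturbation of $\pi_j^0(e_1^0),\ldots,\pi_j^0(e_i^0)$, the orthogonal complement $\langle\pi_j(e_1),\ldots,\pi_j(e_i)\rangle^\perp$ is close in the Grassmannian to $\langle\pi_j^0(e_1^0),\ldots,\pi_j^0(e_i^0)\rangle^\perp$, \emph{provided} the earlier vectors are genuinely independent — which is where the dichotomy feeds back in. So I would set up the induction to carry two things simultaneously: (a) for those $(i,j)$ where $\pi_j^0(e_{i+1}^0)\notin\langle\pi_j^0(e_1^0),\ldots,\pi_j^0(e_i^0)\rangle$, the perturbed quantity is $\ge C_0/2$ (hence in particular nonzero, giving \eqref{imp}); and (b) for those $(i,j)$ where $\pi_j^0(e_{i+1}^0)\in\langle\pi_j^0(e_1^0),\ldots,\pi_j^0(e_i^0)\rangle$, the perturbed quantity, although possibly nonzero, is at least bounded below by $C_0/2$ \emph{if} it is nonzero — but this last point is the subtle one and needs care, because a rank drop can be destroyed by perturbation in a way that produces an arbitrarily small nonzero projection. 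To handle it I would not try to bound that quantity from below directly; instead I would choose the basis $e_1^0,\ldots,e_n^0$ to begin with so that for every $j$ the flag $\langle\pi_j^0(e_1^0),\ldots,\pi_j^0(e_i^0)\rangle$ only jumps in dimension at a set of indices that is ``stable'', e.g. by ordering the $e_i^0$ so that within each $\pi_j$ the independent ones come first after a simultaneous choice — this is the content of the factoring-through-critical-subspaces bookkeeping from \cite{BCCT}, and it is what makes the dichotomy robust.

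I expect the main obstacle to be exactly this robustness of the dichotomy under perturbation: a priori, $|P_{\langle\pi_j(e_1),\ldots,\pi_j(e_i)\rangle^\perp}(\pi_j(e_{i+1}))|$ can take small positive values for $\vec{\pi}$ near $\vec{\pi}_0$ even when it vanishes at $\vec{\pi}_0$, so a naive compactness argument only gives the lower bound on the ``nonzero at $\vec\pi_0$'' branch and gives nothing on the ``zero at $\vec\pi_0$'' branch. The resolution is that the lemma only asserts ``$=0$ \emph{or} $\ge C$'', so on the zero branch I need to show the quantity stays \emph{exactly} zero, not merely small — and that is false for an arbitrary fixed basis, so the basis must be chosen adapted to $\vec\pi_0$. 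Concretely: for each $j$ let $d_j=n_j$ and note $\dim\langle\pi_j^0(e_1^0),\ldots,\pi_j^0(e_n^0)\rangle=n_j$; I would choose the $e_i^0$ so that the first $n_j$ of them that are ``new'' for $\pi_j^0$ occur at indices forming a fixed set, and then define the $e_i$ for nearby $\vec\pi$ by the \emph{same} Gram–Schmidt-type recipe applied to $\vec\pi$, so that the vanishing relations are built in by construction rather than being accidents of $\vec\pi_0$. Once the $e_i$ are so defined, continuity gives the quantitative lower bounds $|\pi_j(e_1)|\ge C$ and the $\ge C$ half of the dichotomy on a uniform neighborhood $\|\vec\pi-\vec\pi_0\|<\nu$, and \eqref{imp} falls out as the statement that the built-in independence is preserved. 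I would close by noting, as in Theorem \ref{techK}, that since the number of pairs $(i,j)$ is bounded by $nJ$, taking $\nu$ to be the minimum of finitely many neighborhood radii and $C$ the minimum of finitely many positive constants completes the proof.
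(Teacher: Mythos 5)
You correctly isolate the crux: with a fixed basis, the ``$=0$'' branch of the dichotomy must remain \emph{exactly} zero under perturbation, and this fails for a basis not adapted to $\vec{\pi}_0$. But your resolution does not close this gap. Your concrete recipe is to ``define the $e_i$ for nearby $\vec{\pi}$ by the same Gram--Schmidt-type recipe applied to $\vec{\pi}$,'' i.e.\ a $\vec{\pi}$-dependent basis. That does not prove the lemma as stated: the lemma asserts a single basis $e_1,\ldots,e_n$, chosen once from $\vec{\pi}_0$, that works simultaneously for all $\vec{\pi}$ with $\|\vec{\pi}-\vec{\pi}_0\|<\nu$, and this is essential in the proof of Theorem \ref{locbd}, where the change of variables and the constants $c,C$ in \eqref{step0'} depend on that one fixed basis. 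Moreover, even in your fixed-basis sketch (``the first $n_j$ of them that are `new' for $\pi_j^0$ occur at indices forming a fixed set'') you never supply the mechanism that makes the zero branch robust, nor do you verify that such a basis exists simultaneously for all $j$.

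The missing argument is the one the paper actually runs: choose the basis greedily so that vanishing only ever happens for the trivial, open reason that the span is already everything. At stage $r$, let $J_r$ be the set of $j$ with $\dim\langle\pi_j^0(e_1),\ldots,\pi_j^0(e_r)\rangle<n_j$, and pick a unit vector $e_{r+1}\notin\bigcup_{j\in J_r}\mathrm{Span}(e_1,\ldots,e_r,V_j^0)$; this is possible because a finite union of proper subspaces cannot cover $\R^n$. With this choice, for each $j$ the images $\pi_j^0(e_1),\ldots,\pi_j^0(e_{n_j})$ are linearly independent, so the ``new'' indices for $j$ are exactly $1,\ldots,n_j$ and vanishing occurs only for $i\ge n_j$, i.e.\ only after saturation. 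Saturation is an open condition: $\dim\langle\pi_j(e_1),\ldots,\pi_j(e_{n_j})\rangle=n_j$ persists for all $\vec{\pi}$ with $\|\vec{\pi}-\vec{\pi}_0\|<\nu$, whence $\langle\pi_j(e_1),\ldots,\pi_j(e_i)\rangle=H_j$ and $P_{\langle\pi_j(e_1),\ldots,\pi_j(e_i)\rangle^\perp}(\pi_j(e_{i+1}))$ is \emph{identically} zero, not merely small, for the same fixed basis. The nonzero branch and \eqref{imp} then follow by continuity over the finitely many pairs $(i,j)$, as you say. Without the greedy ``new for every unsaturated $j$'' construction and the rank-openness step, your argument only controls the branch that is nonzero at $\vec{\pi}_0$ and leaves the zero branch unprotected, which is precisely the failure mode you yourself flagged.
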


\begin{proof} We describe an algorithm for choosing the vectors $e_1,\ldots,e_n$. 

Choose a unit vector $e_1\in \R^n$ so that $e_1\not\in V_j^0$ for all $j=1,\ldots,J$. Then $\pi_j^0(e_1)\not=0$ for all $j$, so for $\nu>0$ sufficiently small,
\[ |\pi_j(e_1)|\ge \frac{1}{2}\min_i|\pi_i^0(e_1)|>0 \]
whenever $\|\vec{\pi}_0-\vec{\pi}\|<\nu$.

Now suppose that $e_1,\ldots,e_r$ have been chosen to satisfy the properties in the lemma until $i=r$. Let $J_r\subset\{1,\ldots,J\}$ be the set of indices $j$ for which $\text{dim}\langle\pi_j^0(e_1),\ldots,\pi_j^0(e_r)\rangle<n_j$. First suppose that $J_r$ is nonempty. Choose a unit vector $e_{r+1}\in \R^n$ such that $e_{r+1}\not\in\underset{j\in J_r}{\cup}\text{Span}(e_1,\ldots,e_r,V_j^0)$ (recall that $V_j^0=\ker\pi_j^0$). Then for each $j\in J_r$, $\pi_j^0(e_{r+1})\not\in\langle \pi_j^0(e_1),\ldots,\pi_j^0(e_r)\rangle$. This is because if there were constants $c_s$ so that $\pi_j^0(e_{r+1})=c_1\pi_j^0(e_1)+\cdots+c_r\pi_j^0(e_r)$, then $e_{r+1}-(c_1e_1+\cdots+c_re_r)\in V_j^0$, which contradicts how we selected $e_{r+1}$. Thus for each $j\in J_r$,
\[ P_{\langle \pi_j^0(e_1),\ldots,\pi_j^0(e_r)\rangle^\perp}(\pi_j^0(e_{r+1}))\not=0. \]
For sufficiently small $\nu>0$ and appropriate $C>0$, this also guarantees that
\[ P_{\langle \pi_j(e_1),\ldots,\pi_j(e_r)\rangle^\perp}(\pi_j(e_{r+1}))|\ge C\]
whenever $\|\vec{\pi}_0-\vec{\pi}\|<\nu$ and $j\in J_r$, and verifies property \eqref{imp} for $i=r+1$. If $j\not\in J_r$, then $\text{dim}(\pi_j^0(e_1),\ldots,\pi_j^0(e_r)\rangle=n_j$. For sufficiently small $\nu>0$, we also have $\text{dim}(\langle\pi_j(e_1),\ldots,\pi_j(e_r)\rangle=n_j$ whenever $\|\vec{\pi}_0-\vec{\pi}\|<\nu$ and $j\not\in J_r$. Then $\langle\pi_j(e_1),\ldots,\pi_j(e_r)\rangle=V_j^\perp$, so
\[ P_{\langle \pi_j(e_1),\ldots,\pi_j(e_r)\rangle^\perp}(\pi_j(e_{r+1}))=0 .\]

If $J_r$ is empty, then choose $e_{r+1},\ldots,e_n$ to be any orthonormal basis of $\langle e_1,\ldots,e_r\rangle^\perp$ in $\R^n$ and the induction ends. 
Since $J_r$ is empty, for each $j\in\{1,\ldots,J\}$, $\text{dim}\langle\pi^0_j(e_1),\ldots,\pi_j^0(e_r)\rangle=n_j$ and for sufficiently small $\nu>0$, $\text{dim}\langle\pi_j(e_1),\ldots,\pi_j(e_r)\rangle=n_j$ whenever $\|\vec{\pi}_0-\vec{\pi}\|<\nu$. Thus 
\[ P_{\langle \pi_j(e_1),\ldots,\pi_j(e_{s-1})\rangle^\perp}(\pi_j(e_s))=0\]
for each $j\in\{1,\ldots,J\}$ and $s\in\{r+1,\ldots,n\}$.

\end{proof}

\begin{definition}  For each $r\in\{1,\ldots,J\}$, let $\mc{J}_r=\{j\in\{1,\ldots,J\}:n_j\ge r\}$. 
\end{definition}

\begin{proof}[Proof of Theorem \ref{locbd}] In this argument, all constants are permitted to depend on $\vec{\pi}_0$ and $\vec{p}$, but not on $\vec{\pi}$. Consider the integral
\begin{equation}\label{startingpt} \int_{[-R,R]^n}\prod_{j=1}^Jf_j(\pi_j({\bf{x}}))^{p_j}d{\bf{x}}.   \end{equation}
Let $e_1,\ldots,e_n$ and $\nu>0$ be as in Lemma \ref{algo}. Express $\x$ in terms of this basis as $\x=y_1+\cdots+y_n$, where $y_i\in\langle e_i\rangle$. Perform a change of variables and use Fubini's theorem to bound the above integral by 
\begin{equation}\label{step0'}C\int_{\{|y_n|\le cR\}}\cdots\int_{\{|y_1|\le cR\}}\prod_{j=1}^Jf_j(\pi_j(y_1)+\cdots+\pi_j(y_n))^{p_j}dy_1\cdots dy_n     \end{equation}
where the constants $c,C>0$ only depend on $\vec{\pi}_0$ and the specific basis $e_1,\ldots,e_n$ we fixed from Lemma \ref{algo}. Each set $\{|y_i|\le cR\}$ is a subset of $\langle e_i\rangle$. For each $\underline{y}=y_2+\cdots+y_n\in\langle e_2,\ldots,e_n\rangle$, consider the innermost integral
\begin{equation*} \int_{\{|y_1|\le cR\}}\prod_{j=1}^Jf_j(\pi_j(y_1)+\pi_j(\underline{y}))^{p_j}dy_1. \end{equation*}
Let $H_j^1=\langle\pi_j(e_1)\rangle$. Apply the 1-dimensional base case in the proof of Proposition \ref{tech} to obtain the upper bound 
\[ 2(cR)^{\max(1-\sum\limits_{j=1}^Jp_j,0)}\prod_{r=1}^J|\pi_r(e_1)|^{-\g(p_r)}\prod_{j=1}^J\|f_j(\cdot+\pi_j(\underline{y})\|_{H_j^1}^{p_j}. \]
The exponent $-\g(p_r)$ depends on $\vec{p}$ and is negative. Since each of the $|\pi_r(e_1)|$ are bounded below by Lemma \ref{algo}, there is a constant $C$ so that the previous displayed math is bounded by
\begin{equation}\label{step1'} CR^{\max(1-\sum\limits_{j=1}^Jp_j,0)}\prod_{j=1}^J\|f_j(\cdot+\pi_j(\underline{y})\|_{H_j^1}^{p_j} . \end{equation}

For each $j$, let $L_j^{(0)}=\pi_j$ and let
\[ L_j^{(1)}=P_{(H_j^1)^\perp}\circ\left.\pi_j\right|_{\langle e_2,\ldots,e_n\rangle}:\langle e_2,\ldots,e_n\rangle \to (H_j^1)^\perp. \] 
Here the notation $(H_j^1)^\perp$ is the orthogonal complement of $\langle\pi_j(e_1)\rangle$ inside of $H_j$. The maps $L_j^{(1)}$ are clearly surjective because $\pi_j(\R^n)=H_j^1+\pi_j(\langle e_2,\ldots,e_n\rangle)=H_j$. For each $\underline{y}\in \langle e_2,\ldots,e_n\rangle$, define $u_j^{(1)}(\underline{y})\in H_j^1$ by $u_j^{(1)}(\underline{y})=P_{H_j^1}(\pi_j(\underline{y}))$, so $\pi_j(\underline{y})=u_j^{(1)}(\underline{y})+L_j^{(1)}(\underline{y})$. Now analyze the quantities $\|f_j(\cdot+\pi_j(\underline{y})\|_{H_j^1}$ that appear in \eqref{step1'}. 
\begin{align*}
    \|f_j(\cdot+\pi_j(\underline{y}))\|_{H_j^1}&=\|f_j(\cdot+u_j^{(1)}(\underline{y})+L_j^{(1)}(\underline{y}))\|_{H_j^1}\\
    &= \sum_{\v\in\mc{L}_j^0}\underset{x\in (\v+Q)\cap H_j^1}{\text{ess sup}}f_j(x+u_j^{(1)}(\underline{y})+L_j^{(1)}(\underline{y})) \\
    &= \sum_{\v\in\mc{L}_j^0+u_j^{(1)}(\underline{y})}\underset{x\in (\v+Q)\cap H_j^1}{\text{ess sup}}f_j(x+L_j^{(1)}(\underline{y})) 
\end{align*}
where in the last line, we used the fact that since $u_j^{(1)}(\underline{y})\in H_j^1$, $H_j^1+u_j^{(1)}(\underline{y})=H_j^1$. Note that for each $\v\in\mc{L}_j^0$, the number of ${\bf{w}}\in\mc{L}_j^0$ which satisfy $({\bf{w}}+u_j^{(1)}(\underline{y})+Q)\cap(\v+Q)\not= \emptyset$ is controlled by the ambient dimension $n$. This means that
\[ \sum_{\v\in\mc{L}_j+u_j^{(1)}(\underline{y})}\underset{x\in (\v+Q)\cap H_j^1}{\text{ess sup}}f_j(\x_j+L_j^{(1)}(y)) \le C\|f_j(\cdot+L_j^{(1)}(y))\|_{H_j^1}. \]
Putting everything together so far, we have bounded \eqref{step0'} by 
\[ CR^{\max(1-\sum\limits_{j=1}^Jp_j,0)}\int_{\{|y_n|\le cR\}}\cdots\int_{\{|y_2|\le cR\}}\prod_{j=1}^J\|f_j(\cdot+L_j^{(1)}(\underline{y}))\|_{H_j^1}^{p_j} d\underline{y} ,\]
where $C$ depends on $\vec{\pi}_0$ and $\vec{p}$.

Now suppose that the integral \eqref{step0'} is bounded by a constant factor times
\begin{align}\label{compcase} 
R^{\sum\limits_{r=1}^{i}\max(1-\sum\limits_{j=1}^Jp_j\text{dim}(\langle L_j^{(r-1)}(e_{r})\rangle),\,0)}\int_{\{|y_n|\le cR\}}\cdots\int_{\{|y_{i+1}|\le cR\}}\prod_{j=1}^J\|f_j(\cdot+L_j^{(i)}(y))\|_{H_j^i}^{p_j} dy ,
\end{align} 
where $H_j^r=\langle\pi_j(e_1),\ldots,\pi_j(e_r)\rangle$. The maps $L_j^{(r)}:\langle e_{r+1},\ldots,e_n\rangle \to (H_j^r)^\perp$ are defined by 
\[ L_j^{(r)}=P_{(H_j^r)^\perp}\circ \left.\pi_j\right|_{\langle e_{r+1},\ldots,e_n\rangle} . \] 
By construction of the $e_1,\ldots,e_n$, either $L_j^{(i)}(e_{i+1})=0$ or $|L_j^{(i)}(e_{i+1})|>c$ for $c$ depending only on $\vec{\pi}_0$. Furthermore, if $L_j^{(i)}(e_{i+1})=0$, then $L_j^{(i)}(e_r)=0$ for all $r=i+1,\ldots,n$. In that case, for each $\underline{y}\in\langle e_{i+1},\ldots,e_n\rangle$,
\[ \|f_j(\cdot+L_j^{(i)}(\underline{y}))\|_{H_j^i}=\|f_j\|_{H_j^i} ,\]
so we may ignore this constant factor from the integral (\ref{compcase}) above. Assume without loss of generality that $|L_j^{(i)}(e_{i+1})|>c$ for each $j$. Then apply the one-dimensional base case of the proof of Proposition \ref{tech} to obtain for each $\underline{y}\in\langle e_{i+2},\ldots,e_n\rangle$
\begin{align*} 
\int_{\{|y_{i+1}|<cR\}}\prod_{j=1}^J&\|f_j(\cdot+L_j^{(i)}(y_{i+1})+L_j^{(i)}(\underline{y}))\|_{H_j^i} ^{p_j}dy_{i+1} \\
&\le CR^{\max(1-\sum\limits_{j}p_j\text{dim}(\langle L_j^{(i)}(e_{i+1})\rangle),0)}\prod_{j=1}^J\|\|f_j(s+t+L_j^{(i)}(\underline{y})\|_{H_j^i,s}\|_{\langle L_j^{(i)}(e_{i+1})\rangle,t}^{p_j}. \end{align*} 
The norms on the right hand side are equal to 
\begin{align} \label{fra}
\sum_{m\in\mc{L}_j^0}\underset{t\in(m+Q)\cap\langle L_j^{(i)}(e_{i+1})\rangle}{\text{ess sup}}\sum_{r\in\mc{L}_j^0}\underset{s\in (r+AQ)\cap H_j^i}{\text{ess sup}}f_j(s+t+L_j^{(i)}(\underline{y}))
\end{align}
It suffices to sum over lattice points $m\in \mc{L}_j\subset\mc{L}_0$ and $r\in \mc{L}_j'\subset\mc{L}_j^0$ which are within a distance $\sqrt{n}$ of $\langle L_j^{(i)}(e_{i+1})\rangle$ and $H_j^i$ respectively. Similar to an analogous step in the proof of Proposition \ref{tech}, (\ref{fra}) is bounded by 
\begin{align}   \sum_{\substack{r\in\mc{L}_j'\\m\in\mc{L}_j}}\underset{s+t\in(r+m+2Q)\cap H_j^{i+1}}{\text{ess sup}}f_j(s+t+L_j^{(i)}(\underline{y})) \label{4'}
\end{align}
where $H_j^{i+1}=\langle \pi_j(e_1),\ldots,\pi_j(e_{i+1})\rangle$. Here we used that both $H_j^i$ and $\langle L_j^{(i)}(e_{i+1})\rangle$ are contained in $H_j^{i+1}$.  
For each $\v\in\Z^n$, the number of $(r,m)\in\mc{L}_j'\times\mc{L}_j$ such that \[(r+m+2Q)\cap(\v+Q)\cap H_j^{i+1}\not=\emptyset\]
is controlled by a constant depending only on dimension.
Also use the fact that $\underset{\v\in\Z^n}{\cup}(\v+Q)$ covers $H_j^{i+1}$ to bound the quantity in (\ref{4'}) by a constant multiple of 
\[ \sum_{\v\in\Z^n}  \underset{x\in(\v+Q)\cap H_j^{i+1}}{\text{ess sup}}f_j(x+L_j^{(i)}(\underline{y}))=\|f_j(\cdot+L_j^{i}(\underline{y}))\|_{H_j^{i+1}}.   \]
This concludes the intermediate inductive step. 

Finally, consider the result of this induction. We have shown that \eqref{step0'} is bounded by a constant multiple of 
\begin{align*}
R^{\sum\limits_{r=1}^{n}\max(1-\sum\limits_{j=1}^Jp_j\text{dim}(\langle L_j^{(r-1)}(e_r)\rangle),\,0)}\prod_{j=1}^J\|f_j\|_{H_j}^{p_j}  .
\end{align*}
The nonnegative input functions $f_j$ in the definition of $\underline{\text{BL}}(\vec{\pi},\vec{p},R)$ are assumed to be constant on cubes in the unit cube lattice of $\R^{n_j}$, where we identify $H_j$ (with the induced metric from $\R^n$) isometrically with $\R^{n_j}$. Then for a dimensional constant $C$,
\[ \|f_j\|_{H_j}\le C\int_{H_j}f_j. \]

It remains to check that the exponent of $R$ is the desired quantity. By the construction in Lemma \ref{algo}, $\dim (\langle L_j^{(r-1)}(e_r)\rangle)=1$ if $n_j\ge r$ and $\dim (\langle L_j^{(r-1)}(e_r)\rangle)=0$ if $n_j<r$. Define indexing sets $\mc{J}_r:=\{j\in\{1,\ldots,J\}:n_j\ge r\}$. Then  for each $r\in\{1,\ldots,n\}$,
\[ \sum_{j=1}^Jp_j\dim (\langle L_j^{(r-1)}(e_r)\rangle )=\sum_{j\in\mc{J}_r}p_j.\]

Note in addition that if $p_j\le \frac{1}{J}$ for each $j$, then for each $r$, $1\le r\le n$,
\[ 1-\sum_{j=1}^Jp_j\dim L_j^{(r-1)}(e_r)\ge 0. \]
In this case, 
\begin{align*} 
n-\sum_{j=1}^Jp_j \dim n_j&=1-\sum_{j=1}^Jp_j\dim\pi_j(e_1)+(n-1)-\sum_{j=1}^Jp_j \dim L^1(\langle e_2,\ldots,e_n\rangle)\\
&=\cdots=\sum_{r=1}^n(1-\sum_{j=1}^Jp_j\dim L_j^{(r-1)}(e_r)).
\end{align*}

\end{proof}

\end{document}